\newtheorem{theorem}{Theorem}
\newtheorem*{lemma*}{Lemma}
\newtheorem{lemma}[theorem]{Lemma}
\newtheorem{corollary}[theorem]{Corollary}
\newtheorem{atheorem}{Theorem}
\newtheorem{alemma}{Lemma}[atheorem]
\theoremstyle{remark}
\newtheorem{rem}[theorem]{Remark}
\newtheorem{ex}[theorem]{Example}
\def\vfi{\varphi}
\def\si{\sigma}
\def\N{\Bbb N}
\def\C{\Bbb C}
\def\D{\Bbb D}
\def\Ga{\Gamma}
\def\ga{\gamma}
\def\th{\theta}
\def\om{\omega}
\def\be{\beta}
\def\ity{\infty}
\def\al{\alpha}
\def\la{\lambda}
\def\arg{\mathop{\mbox{\rm arg}}}
\def\Arg{\mathop{\mbox{Arg}}}
\def\arcsin{\mathop{\mbox{arcsin}}}
\newcommand{\rr}[1]{re^{i{#1}}}
\newcommand{\dff}{\stackrel{\rm def}=}
\newcommand{\ti}{\tilde}
\begin{document}



\subjclass{Primary 30D50}



\title[Argument of  analytic functions and Frostman's  conditions]{Argument of bounded analytic functions and Frostman's type conditions}


%

\author{Igor  Chyzhykov}
\address{Faculty of Mechanics and Mathematics,
Lviv National University,
Uni\-ver\-sy\-tets'ka  1, 79000, Lviv, Ukraine}

\email{ichyzh@lviv.farlep.net}


%

\thanks{}

%

\begin{abstract}
We describe the growth of the naturally defined argument of
a bounded analytic function in the unit disk in terms of the
complete measure introduced by A.Grishin. As a consequence, we characterize the local behavior of a logarithm of  an analytic function. We also find necessary and sufficient conditions for closeness of $\log f(z)$, $f\in H^\infty$, and the local concentration of the zeros of $f$.
\end{abstract}

\maketitle



\section{Introduction}
One of the basic theorems in complex analysis is the Argument
principle, which states that  if $f(z)$ is a meromorphic function inside
and on some closed contour $\gamma$, with $f$ having no zeros or poles
on $\gamma$, then the increase of  $\Arg  f(z)$ along $\gamma$ divided over $2\pi$  is equal to
$N-P$, where $N$ and $P$ denote respectively the number of zeros
and poles of $f(z)$ inside the contour $\gamma$. It seems reasonable to
ask what can be said if the number of zeros (poles) of $f$ is
infinite. Obviously, the contour should contain a singular point and  the increase of  $\Arg  f(z)$ along $\gamma$
need not be bounded in this case. Theorem \ref{t:arg} of this
paper can be considered as a generalization of the Argument
principle for bounded analytic functions in the unit disk $ \D =
\left\{ {z \in \C :\left| z \right| < 1} \right\} $. We compare
the growth of the naturally defined argument of a bounded analytic
function $F$ with the distribution of its complete measure in the
sense of A.Grishin \cite{Gr1,FG}.

Let us introduce some notation.
We write $D(\zeta, \rho)=\{\xi\in \C: |\xi-\zeta|<\rho\}$ The symbols
$C(\cdot)$ and $K(\cdot)$  stand for some positive constants
depending on values in the parentheses, not necessarily the same in each
occurence.
Let $H^\infty$ be the class of bounded analytic functions in $\D$.
It is well-known \cite{Ha, CL} that $f\in H^\infty$, $|f(z)|<C$,
$z \in \D$, $C>0$, can be represented in the form
\begin{equation}\label{e:fbound}
f(z)=Cz^p\tilde{B}(z)g(z),
\end{equation}
 where $p$ is nonnegative integer, $\tilde B$ is the Blaschke
product constructed by the zeros of $f$,
\begin{equation}\label{e:bla}
  \tilde B(z) = \prod\limits_{n = 1}^\infty   \frac{\overline {a_n} {(a_n  -
z)}}{|a_n|{(1 - z\overline {a}_n ) }}\equiv \prod_{n=1}^\infty
\frac{b(z,a_n)}{|a_n|}, \quad  a_n\ne 0, \; \sum_n (1-|a_n|)<\infty,
\end{equation}
 and $g_\psi$ is an analytic function without zeros of the form
\begin{equation}\label{e:noze}
g_\psi(z)=\exp \left\{ { - \frac{1}{{2\pi }}\int\limits_{ - \pi
}^\pi {\frac{{e^{it}  + z}}{{e^{it}  - z}}d\psi^* (t)} +iC'}
\right\},
\end{equation}
where $ \psi^* $ is a non-decreasing function, and $C'$ is a real constant.

We shall also consider the product
\begin{equation}\label{e:bprod}
 B(z) = \prod_{n = 1}^\infty
b(z,a_n)
\end{equation}
 which  differs from $\tilde B(z)$ by a constant factor,
provided that the Blaschke condition (\ref{e:bla}) holds. $B(z)$ converges
almost everywhere to a finite limit $B(e^{i\th})$ as $z $ tends to
$e^{i\th}$ non-tangentially; moreover, $|\ti B(e^{i\th})|=1$.

For a fixed $\th_0$ the following theorem of O.\ Frostman
\cite{CL,Fr} gives  necessary and sufficient conditions for existence
of the radial limit of $\tilde B(z)$.

\begin{atheorem}\label{t:fr}
Necessary and sufficient that
\begin{equation}\label{e:rl}
  \lim_{r \uparrow 1}  f(\rr{\th_0})=L
\end{equation}
and $|L|=1$ for $f=\ti B$, and every subproduct of $\tilde B(z) $,
is that
\begin{equation}\label{e:fr}
 \sum\limits_{k = 1}^\infty  {\frac{{1 - \left| {a_k }
\right|}}{{\left| {e^{i\theta _0 }  - a_k } \right|}}}<\infty.
\end{equation}
\end{atheorem}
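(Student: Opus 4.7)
The plan is to tie the logarithm of each Blaschke factor to the Frostman summand via the elementary identity
\begin{equation*}
|1-z\bar a|^2 - |a-z|^2 = (1-|a|^2)(1-|z|^2),
\end{equation*}
which yields
\begin{equation*}
-\log\left|\frac{b(z,a)}{|a|}\right| = -\tfrac{1}{2}\log\left(1 - \frac{(1-|a|^2)(1-|z|^2)}{|1-z\bar a|^2}\right).
\end{equation*}
Setting $z=re^{i\theta_0}$, I would work with the nonnegative radial sum $S(r) \dff -\log|\tilde B(re^{i\theta_0})|$. Existence of a unimodular radial limit is equivalent to $S(r)\to 0$ together with convergence of the series of arguments $\arg(b(re^{i\theta_0},a_k)/|a_k|)$.

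For sufficiency, assume (\ref{e:fr}) and note that this condition passes to every subproduct, so it suffices to treat $\tilde B$ itself. Using $-\log(1-t)\le 2t$ for $t\in[0,1/2]$, each summand of $S(r)$ is bounded by a constant times $(1-|a_k|)(1-r)/|1-re^{i\theta_0}\bar{a_k}|^2$. I would split the zeros into the \emph{far} group with $|e^{i\theta_0}-a_k|\ge 2(1-r)$, where $|1-re^{i\theta_0}\bar{a_k}|\asymp|e^{i\theta_0}-a_k|$ and the summand is dominated by $(1-|a_k|)/|e^{i\theta_0}-a_k|$, permitting dominated convergence against the Frostman series; and the \emph{near} group $|e^{i\theta_0}-a_k|<2(1-r)$, whose indices form a tail of (\ref{e:fr}) and therefore contribute an amount that vanishes as $r\to 1$. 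A parallel estimate for $\arg(b(re^{i\theta_0},a_k)/|a_k|)$, using the harmonic conjugate of the log-modulus, delivers convergence of $\arg\tilde B(re^{i\theta_0})$ modulo $2\pi$.

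For necessity, if (\ref{e:fr}) diverges, I would extract a subproduct that loses unit modulus at $e^{i\theta_0}$. The idea is to thin $\{a_k\}$ into well-separated clusters supported in annular strips $\rho_n\le|a_k-e^{i\theta_0}|<2\rho_n$ with $\rho_n\to 0$, retaining enough zeros that the restricted Frostman sum still diverges. Choosing $r_n$ with $1-r_n\asymp\rho_n$, the lower bound $-\log(1-t)\ge t$ applied cluster by cluster forces $S(r_n)$ for the subproduct to stay bounded below by a positive constant, so the modulus cannot tend to $1$. The main obstacle is exactly this extraction: one must calibrate the thinning so that geometric separation is strong enough to rule out the cancellations which might leave $|\tilde B(re^{i\theta_0})|\to 1$ for the full product while the subproduct truly diverges away from unit modulus.
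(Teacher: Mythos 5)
This is a background theorem (Frostman's classical result) that the paper cites from \cite{CL,Fr} rather than proves; the closest thing to a proof in the paper is the argument‑based machinery behind Theorem~\ref{t:arg}, whose necessity step works by extracting half‑plane subproducts and showing their \emph{argument} diverges. That comparison exposes the main problem with your proposal.

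Your sufficiency sketch is the standard one and is basically sound, modulo small gaps: the bound $-\log(1-t)\le 2t$ is only available when $t_k\le\frac12$, which need not hold for every $k$, though under \eqref{e:fr} it fails for at most finitely many $k$ (each of which can be handled separately since $t_k(r)\to 0$ for fixed $k$); and the near group $\{k:|e^{i\theta_0}-a_k|<2(1-r)\}$ is not literally a ``tail'' of indices, but the corresponding partial Frostman sum does tend to $0$ by absolute convergence since those index sets shrink to $\emptyset$ as $r\uparrow 1$. A cleaner route, incidentally, is to use
$1-\frac{b(z,a)}{|a|}=\frac{(1-|a|)(|a|+z\bar a)}{|a|(1-z\bar a)}$ together with the elementary estimate $|1-re^{i\theta_0}\bar a|\ge\tfrac12|e^{i\theta_0}-a|$, which bounds $\sum_k|1-b_k(r)|$ uniformly in $r$ by a multiple of the Frostman series and gives uniform convergence of the product directly, avoiding the separate modulus/argument bookkeeping.

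The necessity half, however, rests on an idea that cannot work in general. You propose to thin the zeros into annular clusters and force $S(r_n)=-\log|\tilde B^*(r_ne^{i\theta_0})|$ to stay bounded away from $0$, i.e.\ to exhibit a subproduct whose \emph{modulus} fails to return to $1$. But divergence of \eqref{e:fr} does not imply the existence of such a subproduct: consider, say, $a_k=(1-2^{-k})e^{i 2^{-k}k}$ (at $\theta_0=0$). Here $\frac{1-|a_k|}{|1-a_k|}\asymp\frac1k$, so \eqref{e:fr} diverges, yet $t_k(r)\le\frac{8(1-|a_k|)}{|1-a_k|}\asymp\frac1k\to 0$ and a short computation shows $\sum_k t_k(r)\to 0$ as $r\uparrow 1$; hence $|B^*(r)|\to 1$ along the radius for \emph{every} subproduct. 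The failure in this example is entirely in the argument: $\arg b(1,a_k)\asymp\frac1k$ has one sign, so $\arg B(r)$ diverges. This is exactly why the classical proof (and the necessity step of Theorem~\ref{t:arg} in this paper) splits the zeros by half‑plane, exploits the fixed sign of $\arg b(z,a_n^*)$ along a fixed segment approaching $\zeta_0$, and derives divergence of the argument sum — not of the log‑modulus — when \eqref{e:fr} fails. You would need to replace your modulus‑based extraction with this argument‑based one for the necessity direction to go through.
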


If we drop the condition $|L|=1$, then the theorem holds for $ B$ instead
of $\ti B$ as well.

Theorem \ref{t:fr} was generalized and complemented by many
authors \cite{Ca,AC,Co}. Usually one uses the condition
\begin{equation}\label{e:fr1}
 \sum\limits_{k = 1}^\infty
  \frac{1 - \left| a_k \right|}{\left| e^{i\theta _0}  - a_k
   \right|^{1-\ga}}<\infty
\end{equation}
with $\ga\le 0$ instead of (\ref{e:fr}). We note that if (\ref{e:fr1}) holds with $\ga\le 0$ and $|a_n -e^{i\th_0}|<1$, then  there is
only a finite number of zeros $a_n$ in any Stolz angle with the
vertex $e^{i\th_0}$   where the Stolz angle
with the vertex $\zeta$ is defined by $$S_\si(\zeta)=\{ \zeta\in
\D: |1-z\bar\zeta|\le \si(1-|z|)\},\quad \si\ge 1,$$ provided that (\ref{e:fr1}) is valid. We are interested
in the case when (\ref{e:fr}) fails to hold, but (\ref{e:fr1})
hold, when $\gamma \in (0,1]$.  The limit cases $\ga=1$ and $\ga=0$
correspond to the Blaschke condition and the Frostman condition,
respectively. In this situation the zeros of $B$ can be accumulated on
the radius ending at $e^{i\th_0}$, which is impossible when
$\gamma\le 0$. Thus, $\arg B(z)$ should be defined carefully.
If we want to obtain lower estimates for $|B(z)|$, $z\to
e^{i\th_0}$, $z\in \D$, we must exclude exceptional sets including the zero set.

Relations between conditions on the zeros of a Blaschke product $B$
and the membership  of $\arg B(e^{i\th})$ in $L^p$ spaces, $0<p\le
\infty$, were investigated in \cite{Ry}. Criteria for boundedness of $p$-th integral means, $1\le p<\infty$, of
$\log |B|$ and $\log B$ were established in \cite{VM}.

Since the proof of the necessity of Theorem \ref{t:fr} is based on
estimates  of the argument, one may ask whether it is possible to
describe the zero distribution of a Blaschke product in terms of
the behavior of $\arg B(z)$. A simple example shows that it is not
sufficient to know the radial behavior of the argument.

Let $(a_n)$ be an arbitrary  Blaschke sequence with non-real
elements. We define $c_{2n-1}=a_n$, $c_{2n}=\bar a_n$. Then $$
B(r)=\prod _{n=1}^\infty b(r,c_n)=\prod _{n=1}^\infty
\frac{|a_n|^2 |a_n-r|^2}{|1-ra_n|^2}, \quad 0\le r< 1. $$ Thus, $$\arg
B(r)\dff\sum_{n=1}^\infty \arg b(r,c_n)\equiv 0, \quad 0\le r< 1.$$

But a situation is quite different if we consider the behavior of
$\arg B(z)$ in a Stolz angle  $S_\si(\zeta)$, $\zeta\in \partial
\D$, $1< \si<+\infty$, $S_\si=S_\si(1)$. Then we are able to
describe the zero distribution, and even the distribution of the so-called
complete measure in the sense of A. Grishin \cite{Gr1,FG}.

Let $SH^\infty (\D)$ be the class of  subharmonic functions in
$\D$ bounded from above. In particular, $\log|f|\in SH^\infty
(\D)$ if $f\in H^\infty$. Every function $u\in SH^\infty (\D)$
which is harmonic in a neighborhood of the origin can be
represented in the form (cf. \cite[Ch.3.7]{HK})
\begin{equation}\label{e:uzob}
  u(z)=\int_{\D} \log\frac{|b(z, \zeta)|}{|\zeta|} d\mu_u(\zeta)- \frac1{2\pi}\int_{\partial \D} \frac{1-|z|^2}
  {|\zeta-z|^2}
  d\psi(\zeta),
\end{equation}
where $\mu_u$ is the Riesz measure of $u$ \cite{HK}, and $\psi$ is
a Borel  measure on the unit circle. A \emph{complete measure
$\lambda_u$ of $u$ in the sense of Grishin} is defined \cite{Gr1,FG} by the boundary measure  and the Riesz measure
of $u(z)$. But, since \cite{CL} $$\lim_{r\uparrow1}
\int_{\theta_1}^{\theta_2} \int_{\D} \log\frac{|b(\rr\th,
\zeta)|}{|\zeta|} d\mu_u(\zeta) d\th =0, \quad -\pi\le
\th_1<\th_2\le \pi,$$
i.e. the boundary values of the first integral in (\ref{e:uzob})  do not contribute to the boundary measure,
we can define $\lambda_u$ of a Borel set
$M\subset \overline{\D}$ such that $M\cap\partial{\D}$ is measurable with respect to the Lebesgue measure on $\partial \D$ by
\begin{equation}\label{e:cm}
  \lambda_u(M)= \int_{\D\cap M} (1-|\zeta|)\, d\mu_u(\zeta) + \psi(M\cap \partial \D).
\end{equation}
The measure $\lambda=\lambda_u$ has the following properties:

\begin{itemize}
  \item [(1)] $\lambda$ is finite on $\overline\D$;
  \item [(2)] $\lambda$ is non-negative;
  \item  [(3)] $\lambda$ is a zero
measure outside $\overline{\D}$; \item[(4)]
$d\lambda\Bigr|_{\partial \D}(\zeta)=d\psi(\zeta)$; \item[(5)]
$d\lambda\Bigr|_{\D}(\zeta)= \ (1-|\zeta|)\, d\mu_u(\zeta)$.
\end{itemize}

If $u=\log|f|$, $ f\in H^\infty$, then we shall write $\la_f$
instead of $\la_{\log|f|}$. If $\tilde B$ is a Blaschke product of
form (\ref{e:bla}), then $\lambda_{\ti B}(M)= \sum_{a_n\in M}
(1-|a_n|)$.

We shall say that $g$ is a {\it divisor} of  $f\in H^\ity$ if
$g\in H^\ity$ and  there exists an $h\in H^\ity$ such that $f=gh$.
It is easy to see, that in this case we have
$\lambda_g(M)+\lambda_h(M)= \lambda_f(M)$ for an arbitrary Borel
subset $M$ of $\overline\D$ such that $M\cap\partial{\D}$ is measurable.

The following generalization of Frostman's result on bounded
functions is valid.
\begin{atheorem}[Lemma 3, \cite{AC}] \label{t:ah_cl}
\sl Let $F\in H^\infty$, and $\lambda_F(\{\zeta\})=0$ for some
$\zeta\in
\partial \D$. The following  are equivalent.
\begin{itemize}
  \item [1)] $$\int_{\overline{\D}}
  \frac{d\lambda_F(\xi)}{|\zeta-\xi|}<\infty.$$
  \item [2)] Every divisor of $F$ has a radial limit at $\zeta$.
\end{itemize}
\end{atheorem}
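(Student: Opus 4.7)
My strategy is to exploit the additivity of the complete measure: if $F=gh$ with $g,h\in H^\infty$, then $\lambda_g+\lambda_h=\lambda_F$ and all three are non-negative, so $\lambda_g\le\lambda_F$ as positive Borel measures. Combined with the canonical factorization (\ref{e:fbound})--(\ref{e:noze}) of a divisor, this reduces both implications to a Blaschke factor and a zero-free factor treated separately. I normalize $\zeta=1$ throughout.

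Direction $(1)\Rightarrow(2)$: Since $\lambda_g\le\lambda_F$, condition $(1)$ is inherited by any divisor $g$. Factor $g=cz^{p}\tilde B_g g_{\psi_g}$. The zeros $(b_k)$ of $\tilde B_g$ then satisfy Frostman's condition
$$\sum_k\frac{1-|b_k|}{|1-b_k|}\le \int_{\overline\D}\frac{d\lambda_g(\xi)}{|1-\xi|}<\infty,$$
so Theorem~A delivers a radial limit of modulus one for $\tilde B_g(r)$. For the zero-free factor, write
$$\log g_{\psi_g}(r)=iC'-\frac{1}{2\pi}\int_{-\pi}^{\pi}\frac{e^{it}+r}{e^{it}-r}\,d\psi_g^*(t).$$
An elementary computation gives $|(e^{it}+r)/(e^{it}-r)|\le K/|1-e^{it}|$ uniformly in $r\in[\tfrac12,1)$. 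Since $\lambda_F(\{1\})=0$ forces $\psi_g^*(\{0\})=0$, the inherited condition $(1)$ places the dominating function $K/|1-e^{it}|$ in $L^1(d\psi_g^*)$, so dominated convergence produces a finite limit of the integral as $r\uparrow 1$. Hence $g_{\psi_g}(r)$ tends to a finite non-zero value and $g$ has a radial limit at $1$.

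Direction $(2)\Rightarrow(1)$: Argue by contrapositive. If $\int_{\overline\D} d\lambda_F/|1-\xi|=\infty$, split $\lambda_F$ into its Blaschke-zero part and its boundary part $\psi_F^*$; at least one of the sums $\sum(1-|a_n|)/|1-a_n|$ and $\int d\psi_F^*/|1-e^{it}|$ is infinite. In the first case, extract a subsequence whose Frostman sum still diverges; the corresponding Blaschke subproduct is a divisor of $F$, and a sharpening of the necessity direction of Theorem~A yields a subproduct with no radial limit at $1$. In the second case, select a submeasure $\sigma\le\psi_F^*$ with $\int d\sigma/|1-e^{it}|=\infty$; the function of the form (\ref{e:noze}) built from $\sigma$ is a divisor of $F$ whose radial limit at $1$ can be shown not to exist by analyzing the divergence of the Poisson or conjugate-Poisson integral along the radius.

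The main obstacle lies in this reverse direction: one must exhibit a divisor whose radial limit \emph{fails to exist}, not merely a divisor whose limit has modulus strictly less than one. For Blaschke subproducts this requires arranging the zeros so that $\arg B'(r)$ oscillates as $r\uparrow 1$; for sub-measures of $\psi_F^*$ it requires engineering analogous oscillation of the conjugate Poisson integral, all while preserving the dominance $\lambda_g\le\lambda_F$ that ensures genuine divisibility within $H^\infty$.
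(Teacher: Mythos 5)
This theorem is stated in the paper without proof, being cited from Ahern--Clark \cite{AC} (their Lemma 3), so there is no internal argument to compare against. Your forward implication $(1)\Rightarrow(2)$ is essentially correct: additivity and non-negativity of the complete measure give $\lambda_g\le\lambda_F$ for any divisor $g$, the Blaschke zeros of $g$ then inherit Frostman's condition so that Theorem \ref{t:fr} applies, and for the zero-free factor the uniform bound $|(e^{it}+r)/(e^{it}-r)|\le K/|1-e^{it}|$ for $r\ge\frac12$ together with $\psi_g^*(\{0\})=0$ lets dominated convergence produce a finite limit of $\log g_{\psi_g}(r)$ as $r\uparrow1$.

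The implication $(2)\Rightarrow(1)$ is not established. You correctly identify the obstruction --- the necessity half of Theorem \ref{t:fr} only produces a subproduct whose radial limit fails to have modulus one, which is compatible with a radial limit existing with modulus less than one, and hence does not contradict $(2)$ --- but you then gesture at ``arranging the zeros so that $\arg B(r)$ oscillates'' without any construction. Oscillation of the argument would not by itself suffice in any case: if simultaneously $|B(r)|\to0$, the radial limit exists and equals $0$ (compare the paper's own example with conjugate zeros, where $\arg B(r)\equiv0$ along the radius even though the zeros accumulate). The boundary-measure half has the same defect: divergence of the conjugate Poisson integral of a sub-measure $\sigma$ of $\psi_F^*$ is compatible with $g_\sigma(r)\to0$ radially. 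Closing this gap --- producing, from the divergence of $\int d\lambda_F/|\zeta-\xi|$, a \emph{specific} divisor whose radial limit genuinely fails to exist --- is the actual content of the lemma, and it is exactly what your sketch leaves open.
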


\section{Main results and examples}

  Without loss of generality we can consider the local asymptotic behavior in a neighborhood of $\zeta=1$ ($\th_0=0$). Let
$A(z,\xi)=\frac{1-|\xi|^2}{1-z\bar \xi}$,  $\arg w$ be the
principal branch of $\Arg w$.

\begin{lemma} \label{l:arg} Let $\xi\in\D$, $z\in \D\setminus\{\xi\}$. Then
$|\arg b(z,\xi)|\le \pi \min \Bigl\{ |A(z,\xi)|,1\Bigr\}$.
\end{lemma}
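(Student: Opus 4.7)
The plan is to reduce the estimate to a transparent geometric fact via a single algebraic identity. A direct computation gives
\begin{equation*}
1 - b(z,\xi) = \frac{(1 - z\bar\xi) - \bar\xi(\xi - z)}{1 - z\bar\xi} = \frac{1 - |\xi|^2}{1 - z\bar\xi} = A(z,\xi),
\end{equation*}
so that $b(z,\xi) = 1 - A(z,\xi)$. Since $z\neq\xi$, the left-hand side is nonzero and $\arg b(z,\xi)$ is well defined.

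With this identity in hand, the bound $|\arg b(z,\xi)|\le\pi$ is immediate from the use of the principal branch. To obtain $|\arg b(z,\xi)|\le\pi |A(z,\xi)|$, I would split into two cases. If $|A(z,\xi)|\ge 1$, the bound follows from the previous one. If $r:=|A(z,\xi)|<1$, then $b(z,\xi)=1-A(z,\xi)$ lies on the circle $|w-1|=r$, which is contained in the right half-plane and sits at distance $1$ from the origin. By the standard tangent-line construction from the origin to this circle (the tangent segment and the radius to the tangent point form a right angle, yielding a right triangle with hypotenuse $1$ and one leg $r$), every point $w$ on this circle satisfies $|\arg w|\le\arcsin r$.

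To finish, I would invoke the elementary inequality $\arcsin x\le\frac{\pi}{2}x$ on $[0,1]$, which follows from the convexity of $\arcsin$ on this interval (its graph lies below the chord joining $(0,0)$ to $(1,\pi/2)$). Combining, $|\arg b(z,\xi)|\le\arcsin r\le \frac{\pi}{2}r\le \pi r=\pi|A(z,\xi)|$, which is the desired bound. There is no real obstacle in this argument: once the identity $b(z,\xi)=1-A(z,\xi)$ is noticed, the whole statement reduces to an elementary exercise on the geometry of a circle avoiding the origin.
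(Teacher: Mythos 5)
Your proof is correct, and it takes a genuinely cleaner route than the paper's. The key observation you make, the identity $b(z,\xi)=1-A(z,\xi)$, is not used in the paper; once it is in hand, everything reduces to the elementary geometry of the circle $|w-1|=|A(z,\xi)|$, the tangent-line bound $|\arg w|\le\arcsin|A(z,\xi)|$ when $|A(z,\xi)|<1$, and the convexity inequality $\arcsin x\le\frac{\pi}{2}x$ on $[0,1]$. The paper instead interprets $\varphi_\xi=\arg b(z,\xi)$ as the vertex angle at $z\bar\xi$ in the triangle with vertices $z\bar\xi$, $|\xi|^2$, $1$, and splits according to whether $z\bar\xi$ lies inside or outside the disk on the segment $[\,|\xi|^2,1\,]$ as diameter; in the second case it writes $\varphi_\xi=\arcsin\bigl(\Im b/|b|\bigr)$ and estimates directly. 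Your case split (by $|A(z,\xi)|\ge1$ versus $|A(z,\xi)|<1$) is also simpler: the paper's condition $z\bar\xi\in D_\xi$ implies $|A(z,\xi)|\ge1$ but not conversely, so the paper handles some points with $|A(z,\xi)|\ge1$ in its second case as well. Both proofs funnel through the same final $\arcsin$-to-linear bound; yours gets there with less machinery and, as a bonus, yields the slightly sharper constant $\frac{\pi}{2}$ in place of $\pi$ whenever $|A(z,\xi)|<1$. One small remark for alignment with the paper: the paper also assigns a value $\arg b(z,\xi)=-\pi$ on the radial cut through $\xi$ by semicontinuity; your principal-branch argument already covers this since $|\arg w|\le\pi$ there, so no gap arises.
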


Consider the product $B(z)$ defined by \eqref{e:bprod}.
We  make radial cuts   $l_n=\{\zeta\in \D: \zeta=\tau a_n, \tau\ge
1\}$. The region $\D^*=\D\setminus \bigcup _{n=1}^\infty l_n$
contains no zeros of $B(z)$.  Due to Lemma \ref{l:arg}  we define
(cf. \cite{Ry}) a continuous branch $$\log B(z)\stackrel{\rm def}=
\sum_{n=1}^\infty \log b(z, a_n), \quad z\in \D^*,$$  $\arg
B(z)\stackrel{\rm def}=\Im \log B(z)$.  In particular, we have $\log B(0)=0$ and $\arg (B_1B_2)=\arg B_1+\arg B_2$, where $B_1$, $B_2$ are Blaschke products.
Later, in the proof of Lemma \ref{l:arg}, we
also define $\arg B(z)$ on the cuts except zeros. But the resulting function
will not be continuous there.

In order to formulate our results we need some information on
fractional derivatives \cite[Chap.IX]{Dj}, \cite[Chap.8]{Z}. For
$h\in L(0,a)$ (integrable in the sense of Lebesgue on $(0,a)$) the
fractional integral of Riemann-Liouville $h_\al$ of order $\al>0$
is defined by the formula
 \cite{HL,Dj,Z}
$$h_\al(r)=D^{-\al} h(r)=\frac 1{\Ga(\al)}\int _0^r (r-x)^{\al-1}
h(x)\, dx, \quad r\in (0,a),$$
 $$D^0h(r)\equiv
h(r), \quad D^\al h(r)=\frac{d^p}{dr^p} \{ D^{-(p-\al)} h(r)\}, \;
\quad \al\in (p-1,p],\; p\in \mathbb{N},$$ where $\Gamma(\alpha)$
is the Gamma function. The function $h_\al$ is continuous for  $\al\ge 1$, and
coincides with a primary function of the correspondent order when
$\al\in\N$. We note that for $\alpha<0$ the operator $D^\al$ is
associative and commutative as a function of $\alpha$. When writing
$D^{-\al}f(z)$ we always mean  that the operator is taken on the
variable $r=|z|$.

Let $S_\si^*(\zeta)=S_\si(\zeta)\cap D(\zeta, \frac12)$. The following theorem yields a necessary and sufficient condition for the  local growth of $\arg  f$ in terms of the generalized Frostman's condition  for the complete measure in the sense of Grishin of  a bounded analytic function in the unit disk.

\begin{theorem}\label{t:arg}
Let $F$ be a bounded analytic function in $\D$,  $0\le \gamma<1$,
$\zeta_0\in \partial \D$. In order that for every  divisor $f$ of
$F$ and every $\sigma>1$ there exist a constant $K=K(\ga, \si,
F)>0$ such that
\begin{equation}\label{e:int}
  \sup _{z\in S_\si^*(\zeta_0)} |D^{-\ga} \arg f(z)|<K,
\end{equation}
it is necessary and sufficient that
\begin{equation}\label{e:fr2}
\int_{\overline \D}
  \frac{d\lambda_F(\zeta)}{\left|\zeta_0 - \zeta
   \right|^{1-\ga}}<\infty.
\end{equation}
\end{theorem}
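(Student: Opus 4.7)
Without loss of generality, take $\zeta_0=1$. Any divisor $f$ of $F$ satisfies $\la_f\le \la_F$ as measures on $\overline\D$, so condition \eqref{e:fr2} is inherited automatically; hence for sufficiency it is enough to establish
\[
\sup_{z\in S_\si^*(1)}|D^{-\ga}\arg f(z)|\le C(\si,\ga)\Bigl(1+\int_{\overline\D}\frac{d\la_f(\zeta)}{|1-\zeta|^{1-\ga}}\Bigr).
\]
Using the factorization \eqref{e:fbound}, write $f=Cz^p\ti B_f\,g_{\psi_f}$. The factor $Cz^p$ contributes a bounded summand to $\arg f$ on $S_\si^*(1)$, and the remaining part splits modulo a constant into the absolutely convergent Blaschke series $\arg\ti B_f(z)=\sum_n\arg b(z,a_n)$ (Lemma \ref{l:arg}) and the outer-factor argument
\[
\arg g_{\psi_f}(re^{i\th})=\frac1\pi\int_{-\pi}^\pi\frac{r\sin(t-\th)}{|e^{it}-re^{i\th}|^2}\,d\psi_f(t)+\const,
\]
read off from \eqref{e:noze}. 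Fubini and dominated convergence allow $D^{-\ga}_r$ to pass inside the sum in $n$ and the integral in $t$.

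The sufficiency then funnels into two uniform pointwise kernel estimates, which I would prove for $z=re^{i\th}\in S_\si^*(1)$:
\[
\bigl|D^{-\ga}_r\arg b(re^{i\th},a)\bigr|\le \frac{C(\si,\ga)(1-|a|)}{|1-a|^{1-\ga}},\quad a\in\D,
\]
\[
\Bigl|D^{-\ga}_r\,\frac{r\sin(t-\th)}{|e^{it}-re^{i\th}|^2}\Bigr|\le \frac{C(\si,\ga)}{|1-e^{it}|^{1-\ga}},\quad t\in(-\pi,\pi].
\]
Both come from splitting the Riemann--Liouville integral at $x_0=\max(0,|z|-|1-a|)$ (respectively $|z|-|1-e^{it}|$). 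Near the endpoint the denominator is comparable to $|1-a|$ (respectively $|1-e^{it}|$) because $z$ lies in a Stolz angle at $1$, so integrating $(|z|-x)^{\ga-1}$ over a window of that length gains the factor $|1-a|^{\ga}$; far from the endpoint, Lemma \ref{l:arg} (or the trivial bound on the conjugate Poisson kernel) combines with $\int_0^{x_0}(|z|-x)^{\ga-1}dx\le\ga^{-1}|z|^{\ga}$ to close the estimate. Summing over $n$ and integrating against $d\psi_f$ reassembles the right-hand side as $\int|1-\zeta|^{\ga-1}d\la_f(\zeta)$.

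For necessity, assume \eqref{e:fr2} diverges; then at least one of
\[
\sum_{n}\frac{1-|a_n|}{|1-a_n|^{1-\ga}},\qquad \int_{-\pi}^\pi\frac{d\psi_F(t)}{|1-e^{it}|^{1-\ga}}
\]
is infinite. I would restrict the offending part of $\la_F$ to a thin cone $\{|\arg(1-\zeta)|<\tau\}$ near $\zeta_0=1$, with $\tau$ chosen small in terms of $\si$, producing a divisor $f_0$ of $F$ for which the divergence persists and for which every $\arg b(r,a_n)$ (respectively every value of the conjugate Poisson kernel at $z=r$) has a fixed sign on $r\in[0,1)$. With cancellation eliminated, the matching \emph{lower} kernel estimate---isolating the contribution of the window $[x_0,|z|]$, on which the integrand has definite sign and magnitude $\asymp|1-a|^{-1}$ (respectively $|1-e^{it}|^{-1}$)---forces $D^{-\ga}\arg f_0(r)\to\infty$ as $r\uparrow 1$, contradicting \eqref{e:int}.

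The principal obstacle is the pair of kernel estimates: producing the Frostman weight $|1-\zeta|^{\ga-1}$ both as a tight upper bound for sufficiency and, after the cone-restriction kills cancellation, as an effective lower bound for necessity. Once these are in hand, the remainder is bookkeeping with the canonical factorization, Fubini, and the inequality $\la_f\le \la_F$.
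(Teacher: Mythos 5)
Your sufficiency argument matches the paper's: factor $f$, bound $|\arg b(z,a)|$ by $\pi\min\{|A(z,a)|,1\}$ (Lemma~\ref{l:arg}), pass $D^{-\ga}$ through the sum/integral, apply Lemma~\ref{l:gaint} (with $\al=1$ for the Blaschke part, $\al=2$ for the Poisson kernel) and then Lemma~\ref{l:GPV} to replace $|1-z\bar\zeta|$ by $|1-\zeta|$ inside the Stolz angle. Your two kernel inequalities are precisely what those three lemmas combine to give, so this half is sound and is the paper's route.

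The necessity has a genuine gap, and it lies exactly where you flag the ``principal obstacle.'' Restricting the measure to a thin cone $\{|\arg(1-\zeta)|<\tau\}$ can kill the divergence entirely rather than preserve it. For the Blaschke part, the offending zeros may all satisfy $\arg(1-a_n)\in[\tau',\pi/2)$ with $\tau'>\tau$; the Stolz-type cone then contains none of them, yet $\sum(1-|a_n|)/|1-a_n|^{1-\ga}$ can still diverge, since $1-|a_n|\asymp|1-a_n|\cos\arg(1-a_n)$ only changes the terms by a bounded factor. For the outer part the situation is worse: $\la_F$ restricted to $\partial\D$ lives where $|\arg(1-e^{it})|\to\pi/2$ as $t\to0$, so a thin cone with $\tau<\pi/2$ intersects the boundary measure only in the single point $\zeta_0$ and loses it completely. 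Moreover, evaluating at $z=r$ real does give $\arg b(r,a_n)$ a definite sign once all $a_n$ lie on one side of the real axis, but ``one side of the axis'' is a half-disk restriction, not a cone restriction; you have conflated the two. The paper's correct device is precisely the half-disk split: take the subproduct of zeros with $\Im a_n\ge0$ near $\zeta_0$ (and the restriction of $\psi^*$ to $[0,\pi/2]$), and evaluate not on the real radius but along the tilted segment $\arg(1-z)=\pi/4$ sitting in the opposite half. That choice guarantees both that the relevant half of the divergence survives and that each $\arg b(z,a_n^*)\ge0$, after which the window estimate on $[r-2|1-a_n^*|,\,r-|1-a_n^*|]$ yields the lower bound $\gtrsim(1-|a_n^*|)/|1-a_n^*|^{1-\ga}$. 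As written, your necessity argument would not close.
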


\begin{rem} Since (\ref{e:int}) must hold for every
divisor $f$ of $F$, (\ref{e:int}) is equivalent  to
\begin{equation}\label{e:int'}
  \sup _{z\in S_\si^*(\zeta_0)} D^{-\ga} |\arg f(z)|<K
\end{equation}
for every divisor $f$ and every $\si>1$.  In fact, we shall prove
that $(\ref{e:int})\Rightarrow (\ref{e:fr2}) \Rightarrow
(\ref{e:int'})$. Since it is evident that (\ref{e:int'}) implies
(\ref{e:int}), this will prove Theorem \ref{t:arg}.
\end{rem}

\begin{rem} As we shall see, in order that (\ref{e:fr2}) hold it is sufficient that
(\ref{e:int}) holds for a finite number of divisors  of a special
form. Moreover, it is enough to require that $$\varlimsup_{z\to
\zeta_0, z\in \Gamma_j} |D^{-\ga} \arg f(z)|<+\infty,$$ for
two particular segments $\Gamma_j$ ending at $\zeta_0$, $\Gamma_j\subset
\D\cup \{\zeta_0\}$, $j\in \{1, 2\}$.
\end{rem}

\begin{corollary}\label{t:b_arg}
Let $B$ be a Blaschke product defined by (\ref{e:bprod}), $0\le
\gamma<1$, $\zeta_0\in \partial \D$. In order that for every
subproduct $B^*$ of $B$ and every $\sigma>1$ there exist a
constant $K=K(\ga, \si, B)>0$ such that
\begin{equation}\label{e:int_b}
  \sup _{z\in S^*_\si(\zeta_0)} |D^{-\ga} \arg B^*(z)|<K,
\end{equation}
it is necessary and sufficient that
\begin{equation}\label{e:fr_bl}
 \sum\limits_{k = 1}^\infty
  \frac{1 - \left| a_k \right|}{\left|\zeta_0 - a_k
   \right|^{1-\ga}}<\infty.
\end{equation}
\end{corollary}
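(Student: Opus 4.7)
The plan is to deduce Corollary \ref{t:b_arg} directly from Theorem \ref{t:arg} applied to $F=B$. Two translations suffice: the integral condition \eqref{e:fr2} must become the series \eqref{e:fr_bl}, and the divisor condition \eqref{e:int} must become the subproduct condition \eqref{e:int_b}.

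For the first translation, I would compute $\lambda_B$ explicitly. Since $B$ is inner, $|B(e^{i\theta})|=1$ a.e.\ on $\partial\D$, so the boundary measure $\psi$ in the representation \eqref{e:uzob} of $u=\log|B|$ vanishes. The Riesz measure of $\log|B|$ is counting measure on the zero set $\{a_n\}$, so \eqref{e:cm} gives
$$\lambda_B = \sum_{n\ge 1}(1-|a_n|)\,\delta_{a_n},$$
which turns \eqref{e:fr2} into exactly \eqref{e:fr_bl}.

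For the second translation, I would identify the $H^\infty$-divisors of $B$ with the subproducts of $B$ up to unimodular constants. If $B=gh$ with $g,h\in H^\infty$, I would apply the canonical inner-outer factorization to both $g$ and $h$, and use uniqueness together with the triviality of the outer and singular inner factors of $B$: this forces the outer factors of $g$ and $h$ to be constants, the singular inner factors to be $1$, and the Blaschke factors to be complementary subproducts of $B$. Because multiplication by a unimodular constant shifts $\arg g$ by a real constant $c_0$ and $|D^{-\gamma}c_0|(r)=|c_0|\,r^\gamma/\Gamma(\gamma+1)$ is bounded on $[0,1)$, condition \eqref{e:int} for any divisor $g$ is equivalent to \eqref{e:int_b} for the underlying subproduct.

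With these two identifications, Corollary \ref{t:b_arg} is the statement of Theorem \ref{t:arg} specialized to $F=B$. The only point requiring some care is the divisor/subproduct correspondence, but it is a routine consequence of the uniqueness of the inner-outer factorization, so no substantive obstacle is anticipated.
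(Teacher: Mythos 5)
Your first translation is slightly off and your second translation contains a genuine gap, and together they make the instantiation argument break down.

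First, $B$ as defined by \eqref{e:bprod} is \emph{not} inner: its boundary modulus is $\prod_n|a_n|<1$ a.e., so $\log|B|$ picks up a nonzero boundary measure $\psi$ equal to a positive constant multiple of Lebesgue measure on $\partial\D$. Hence $\lambda_B$ has a boundary component. For $\gamma\in(0,1)$ this extra term contributes a finite quantity to \eqref{e:fr2} and is harmless, but for $\gamma=0$ the integral $\int_{\partial\D}|\zeta_0-\zeta|^{-1}\,d\theta$ diverges, so \eqref{e:fr2} for $F=B$ \emph{always} fails, whereas \eqref{e:fr_bl} can perfectly well hold. The clean route is to apply Theorem \ref{t:arg} to the normalized product $\tilde B$ from \eqref{e:bla}, for which the paper records $\lambda_{\tilde B}=\sum(1-|a_n|)\delta_{a_n}$ and $\arg\tilde B^*=\arg B^*$.

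Second, and more seriously, the claim that the $H^\infty$-divisors of $B$ (or $\tilde B$) are exactly the subproducts up to unimodular constants is false. The inner-outer factorization only forces the singular inner factors to be trivial and the Blaschke factors to be complementary subproducts; the outer factors $O_g,O_h$ need only satisfy $O_gO_h=\text{const}$, which is met by any bounded outer function with $1/O_g$ also bounded, i.e.\ any $O_g$ with $\log|O_g|\in L^\infty(\partial\D)$. Such $O_g$ are generally non-constant, and their arguments (conjugate functions of bounded data) can be unbounded near $\zeta_0$. Consequently, the hypothesis of Corollary \ref{t:b_arg} (control over \emph{subproducts} only) is strictly weaker than the hypothesis of Theorem \ref{t:arg} applied to $\tilde B$ (control over \emph{all divisors}), so the necessity direction of the corollary cannot be obtained by instantiating the statement of the theorem. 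What saves the corollary is visible only inside the proof of the theorem's necessity: as the remark after Theorem \ref{t:arg} emphasizes, the argument there uses only two specific subproducts (the zeros in each half-plane near $\zeta_0$), so \eqref{e:int_b} for subproducts already yields \eqref{e:fr_bl}. Your proposal needs to invoke that remark (or re-run that part of the proof) rather than rely on a divisor$\,=\,$subproduct identification that does not hold.
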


\begin{corollary} \label{c:cc} Let $F\in H^\infty$, $0\le \ga <1$. If (\ref{e:fr2}) holds,
then for every divisor $f$ of $F$ the function  $\arg f(r)$ is bounded if $\ga=0$, and belongs to the convergence class of order $\ga$ if $\ga\in (0,1)$, i.e. $$  \int_0^1
(1-r)^{\ga-1} |\arg f(r)| dr<+\infty.$$
\end{corollary}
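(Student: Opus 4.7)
The plan is to specialize Theorem~\ref{t:arg} to the positive real axis at $\zeta_0=1$. By the first remark after Theorem~\ref{t:arg}, (\ref{e:fr2}) implies (\ref{e:int'}): for every divisor $f$ of $F$ and every $\sigma>1$ there is $K=K(\gamma,\sigma,F)>0$ with $\sup_{z\in S_\sigma^*(1)} D^{-\gamma}|\arg f(z)|\le K$. Since every $r\in(1/2,1)$ lies in $S_\sigma^*(1)$ (as $1-r\le\sigma(1-r)$ trivially and $|1-r|<1/2$), specializing to $z=r$ will yield $D^{-\gamma}|\arg f(r)|\le K$ on this segment.

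For $\gamma=0$ the operator $D^0$ is the identity, so $|\arg f(r)|\le K$ on $(1/2,1)$ is immediate. To extend boundedness to $[0,1/2]$ I would invoke the factorization (\ref{e:fbound}) together with Lemma~\ref{l:arg}: each Blaschke summand is bounded by $\pi|A(r,a_n)|\le C(1-|a_n|^2)$ (using $|1-r\bar a_n|\ge 1/2$ on this segment), which is summable by the Blaschke condition, while the argument of the zero-free singular inner factor $g_\psi$ is a harmonic function, hence bounded on compact subsets of $\D$.

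For $\gamma\in(0,1)$ the bound $D^{-\gamma}|\arg f(r)|\le K$ reads
\[
\int_0^r (r-x)^{\gamma-1}|\arg f(x)|\,dx \le \Gamma(\gamma)K, \qquad r\in(1/2,1).
\]
Fixing $\rho\in(1/2,1)$ and restricting the integration to $[0,\rho]$, I would let $r\uparrow 1$: the integrand decreases pointwise to $(1-x)^{\gamma-1}|\arg f(x)|$ and, for $r$ bounded below by any fixed $\rho_0>\rho$, is dominated by $(\rho_0-x)^{\gamma-1}|\arg f(x)|$, which is integrable on $[0,\rho]$ by the $\gamma=0$ discussion above. Dominated convergence then gives $\int_0^\rho (1-x)^{\gamma-1}|\arg f(x)|\,dx \le \Gamma(\gamma)K$, and monotone convergence as $\rho\uparrow 1$ yields the integral bound on $(0,1)$; the piece over $[0,1/2]$ is automatic since the weight is bounded there. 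The only technical point is this limit passage through the fractional-integral inequality; the analytic substance rests on Theorem~\ref{t:arg}.
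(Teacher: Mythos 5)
Your argument is correct and mirrors the paper's: both derive the integral bound from the implication $(\ref{e:fr2})\Rightarrow(\ref{e:int'})$ along the real radius together with the elementary observation that $(r-x)^{\gamma-1}\ge(1-x)^{\gamma-1}$ for $0\le x<r<1$ (since $\gamma-1<0$). The paper simply writes this pointwise inequality under the integral sign and takes the supremum over $r$, bypassing your dominated/monotone-convergence limit passage, and it attributes the $\gamma=0$ case to Theorem~\ref{t:fr} rather than reading it off from Theorem~\ref{t:arg} as you do; these are presentational differences only.
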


\begin{proof}[Proof of Corollary \ref{c:cc}]
In fact, if $0<\ga<1$, then
\begin{gather*} \sup_{0< r<1} D^{-\ga} |\arg f(r)|=
\sup_{0< r<1} \frac1{\Ga(\ga)} \int_0^r (r-x)^{\ga-1} |\arg f(x)|
dx\ge
\\ \ge \sup_{0< r<1} \frac1{\Ga(\ga)}\int_0^r (1-x)^{\ga-1} |\arg f(x)| dx= \frac1{\Ga(\ga)} \int_0^1
(1-x)^{\ga-1} |\arg f(x)| dx.
\end{gather*}
The  case $\ga=0$ follows from Theorem \ref{t:fr}.
\end{proof}

Since for any $\si>1$ we have $\D\subset \bigcup_{|\zeta|=1} S^*_\si(\zeta) \cup \overline{D}(0,\frac12)$, from Theorem \ref{t:arg} we get the following corollary.
\begin{corollary}\label{c:unif_arg}
Let $F$ be a bounded analytic function in $\D$,  $0\le \gamma<1$, and
$\zeta_0\in \partial \D$. Then for
\begin{equation*}
  \sup _{z\in \D} |D^{-\ga} \arg f(z)|<\infty
\end{equation*}
to hold, it  is necessary and sufficient that
\begin{equation*}
\sup_{\zeta_0\in \partial\D}\int_{\overline \D}
  \frac{d\lambda_F(\zeta)}{\left|\zeta_0 - \zeta
   \right|^{1-\ga}}<\infty.
\end{equation*}
\end{corollary}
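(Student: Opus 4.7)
My plan is to deduce Corollary \ref{c:unif_arg} from Theorem \ref{t:arg} applied pointwise at every boundary point $\zeta_0\in \partial \D$ and to patch the resulting local estimates together using the covering $\D \subset \overline{D}(0,\tfrac12) \cup \bigcup_{\zeta\in \partial \D} S_\si^*(\zeta)$ that is noted immediately before the statement.

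For the sufficiency direction I would fix some $\si>1$ and assume that
\[
M:=\sup_{\zeta_0\in \partial \D}\int_{\overline \D}\frac{d\la_F(\zeta)}{|\zeta_0-\zeta|^{1-\ga}}<\infty.
\]
Then condition \eqref{e:fr2} holds at every $\zeta_0$, so Theorem \ref{t:arg} supplies a bound $\sup_{z\in S_\si^*(\zeta_0)}|D^{-\ga}\arg f(z)|\le K(\ga,\si,F)$ for each divisor $f$ of $F$. The next step is to show that $K$ can be taken independently of $\zeta_0$: inspection of the implication $(\ref{e:fr2})\Rightarrow (\ref{e:int'})$ in the proof of Theorem \ref{t:arg} should reveal that the constant enters only through the value of the Frostman integral at $\zeta_0$, which is dominated by $M$. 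Covering $\D\setminus \overline{D}(0,\tfrac12)$ by the regions $S_\si^*(\zeta_0)$ then transfers this to a uniform bound there. On the compact set $\overline{D}(0,\tfrac12)$ the continuous branch of $\arg f$ defined on the slit region $\D^*$ (with $\log f(0)=0$) is bounded in terms of $\|F\|_\ity$ and the finitely many zeros of $f$ inside this disk, and its fractional integral over the radial range $[0,\tfrac12]$ is trivially bounded.

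For the necessity direction I would reverse the argument: the hypothesis $\sup_{z\in \D}|D^{-\ga}\arg f(z)|<\infty$ immediately implies the local hypothesis \eqref{e:int} of Theorem \ref{t:arg} at every $\zeta_0$, so the integral $\int_{\overline \D}|\zeta_0-\zeta|^{\ga-1}d\la_F(\zeta)$ is finite for each boundary point. Uniformity in $\zeta_0$ reduces to tracking constants in the implication $(\ref{e:int})\Rightarrow (\ref{e:fr2})$: that step is to be proved by testing on finitely many divisors of a special form, as flagged in the remark after Theorem \ref{t:arg}, and should bound the integral by a multiple of $\sup_{z\in S_\si^*(\zeta_0)}|D^{-\ga}\arg f(z)|$ with a constant depending only on $\si$ and $\ga$. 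Since this supremum is dominated by the global one, which is independent of $\zeta_0$, the resulting bound on the integral is uniform in $\zeta_0$.

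The main obstacle is not a new idea but this bookkeeping: I must verify, from the proof of Theorem \ref{t:arg}, that the constants in both implications can be made to depend on $F$ only through the Frostman-type integral at the chosen boundary point (and hence uniformly by $M$), rather than through some unspecified aspect of $F$. Once that quantitative form of Theorem \ref{t:arg} is extracted, Corollary \ref{c:unif_arg} follows by the covering of $\D$ and the trivial treatment of the central disk outlined above.
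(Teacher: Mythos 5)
Your proposal is correct and is essentially the paper's own argument: the paper derives Corollary \ref{c:unif_arg} immediately from Theorem \ref{t:arg} together with the covering $\D\subset\overline{D}(0,\tfrac12)\cup\bigcup_{|\zeta|=1}S_\si^*(\zeta)$, leaving the uniformity in $\zeta_0$ implicit. You correctly pinpoint the only real bookkeeping step, namely that the constant in Theorem \ref{t:arg} depends on $F$ and $\zeta_0$ only through the value of the Frostman-type integral at $\zeta_0$ (plus $C(\ga,\si)$), which the final displays in both the sufficiency and necessity parts of Section \ref{s:prof_arg} indeed show.
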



\begin{ex} The analytic function $$F(z)=\exp\Bigl\{-\frac {1+z}{1-z} \Bigr\}, \quad z\in\D,$$
shows that the condition $\la_F(\{\zeta\})=0$ in Theorem \ref{t:ah_cl} is essential. In fact, we have $\la_F(\zeta)=\delta(\zeta-1)$ where $\delta(\zeta-1)$  is the unit mass supported at $\zeta=1$. The function $F$ has the non-tangential limit  $0$ as $z\to 1$, $z\in \D$, but 
\begin{equation}\label{e:la_ga}
\int_{\overline{\D}}\frac {d\la_F(\xi)}{|\xi-1|^{1-\ga}}=\infty, \quad \ga <1.
\end{equation}
We have
\begin{equation*}
    \arg F(\rr\vfi)=\Im \Bigl\{- \frac{1+\rr\vfi}{1-\rr\vfi}\Bigr\}=-\frac {2r\sin \vfi}{|1-\rr\vfi|^2}.
\end{equation*}
It is clear that $\arg F(\rr{\beta(r-1)})\to +\infty $ as $r\uparrow1$ for any   positive constant $\be$.
Theorem \ref{t:arg} yields that $D^{-\ga} \arg F(z)$ is unbounded for any $\ga<1$, consequently
$$\arg F(z)\ne O\Bigl( \frac1{(1-|z|)^\ga}\Bigr), \quad z\to1, z\in S_\si, \si>1, \ga<1.$$
The last relation follows from the fact that $h(r)=O((1-r)^{-\ga})$ $(r\uparrow1)$ implies  $D^{-\ga_1}h(r)=O(1)$ $(r\uparrow1)$ provided $\ga<\ga_1<1$ (cf. Lemma \ref{l:gaint} and the lemma from \cite{Ch1}).
\end{ex}

\begin{ex} Let $\al\in [0,1)$, $$
\psi^*(t)=\left\{
          \begin{array}{ll}
            t^{1-\al}, & t\in [0,\pi] \\
            -|t|^{1-\al}, & t\in [-\pi,0].
          \end{array}
        \right.$$
Consider the function  $g(z)=g_\psi(z)$ defined by (\ref{e:noze}), where $C'=0$. Then $g$ is analytic, bounded and has no zeros in $\D$. In this case $\la_{g}\Bigr|_{\D}$ is the zero measure, and $d\la_{g}(e^{it})=d\psi(t)$, $t\in [-\pi,\pi]$. We have
\begin{gather} \label{e:arg_ex_2}
    \int_{\overline\D} \frac{d\la_{g}(\zeta)}{|\zeta-1|^{1-\ga}}=\int_{-\pi}^\pi \frac {d\psi^*(t)}{|e^{it}-1|^{1-\ga}}=2(1-\al)\int_0^\pi \frac{dt}{t^\al|e^{it}-1|^{1-\ga}}.
\end{gather}
Since $|e^{it}-1|\sim t$ as $t\downarrow0$ the integral from (\ref{e:arg_ex_2}) is convergent if and only if the integral
$\int_0^\pi t^{-1-\al+\ga} dt $ is convergent.

Thus,  if $\ga >\al$ we have $$ D^{-\ga} \arg g_\psi(z)=O(1), \quad z\to 1, z\in S_\si, \si>1.$$
In the limit case $\ga=\al=0$ one can show that $$\arg g(r)\asymp \log \frac 1{1-r}, \quad  r\uparrow1.$$
\end{ex}

Now we consider the local behavior of the logarithm of a bounded function.
Following C.N.Linden \cite{Li_m} we introduce characteristics of
concentration of zeros. Let $n_z(h)$ be the number of zeros of
an analytic function $f$ in $\overline{D}(z,h(1-|z|))$, $$ N_z(h)=\sum_{|a_n-z|\le
h(1-|z|)} \ln \frac{h(1-|z|)}{|z-a_n|}=\int\limits_0^{(1-|z|)h}
\frac{n_z(s)}s ds.$$
These quantities are usually used for characterizing the local behavior of the modulus  of an analytic function \cite{Hay_min}, \cite{Kras}.

\begin{theorem}\label{t:lnb}
Let $F\in H^\infty$, $0\le \gamma<1$, $0<h<1$, and $\zeta_0\in
\partial \D$. In order that for every divisor $f$ of $F$ and every
$\sigma>1$ there exist a constant $K=K(\ga, \si, F)>0$ such that
\begin{equation}\label{e:intr}
  \sup _{z\in S^*_\si(\zeta_0)} |D^{-\ga} (\log f(z) +N_z(h))|<K,
\end{equation}
it is necessary and sufficient that
\begin{equation}\label{e:fr2r}
\int_{\overline{\D}}
\frac{d\lambda_F(\zeta)}{|\zeta_0-\zeta|^{1-\ga}}<\infty.
\end{equation}
\end{theorem}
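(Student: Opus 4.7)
The plan is to reduce to Theorem \ref{t:arg} for the imaginary part and to estimate the real part $\log|f(z)|+N_z(h)$ directly. Since $N_z(h)\in\R$ and $D^{-\ga}$ is a real-linear operator in $r=|z|$,
\begin{equation*}
D^{-\ga}(\log f(z)+N_z(h)) = D^{-\ga}(\log|f(z)|+N_z(h)) + i\,D^{-\ga}(\arg f(z)),
\end{equation*}
so it suffices to bound the two summands separately.

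Necessity $(\ref{e:intr})\Rightarrow(\ref{e:fr2r})$ is immediate: taking imaginary parts in (\ref{e:intr}) gives $|D^{-\ga}\arg f(z)|\le |D^{-\ga}(\log f(z)+N_z(h))|<K$ for every divisor $f$ of $F$ and every $\si>1$, which is exactly hypothesis (\ref{e:int}) of Theorem \ref{t:arg}; the necessity half of that theorem then yields (\ref{e:fr2r}).

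For sufficiency $(\ref{e:fr2r})\Rightarrow(\ref{e:intr})$, Theorem \ref{t:arg} already controls the imaginary part $D^{-\ga}\arg f(z)$ on $S_\si^*(\zeta_0)$, so the real content is a uniform bound on $D^{-\ga}(\log|f(z)|+N_z(h))$. Starting from the Riesz representation (\ref{e:uzob}) with $u=\log|f|$ and inserting the definition of $N_z(h)$, I would split the Riesz integral according to whether $\zeta$ lies in the local disk $D(z,h(1-|z|))$ or not, obtaining
\begin{gather*}
\log|f(z)|+N_z(h) = \int_{\D\setminus D(z,h(1-|z|))}\log\tfrac{|b(z,\zeta)|}{|\zeta|}\,d\mu_f(\zeta)\\
{}+\int_{D(z,h(1-|z|))}\log\tfrac{h(1-|z|)}{|1-\bar\zeta z|}\,d\mu_f(\zeta) - \tfrac{1}{2\pi}\int_{\partial\D}\tfrac{1-|z|^2}{|\zeta-z|^2}\,d\psi(\zeta).
\end{gather*}
On the local disk $|1-\bar\zeta z|\asymp 1-|z|$, so the middle integrand is uniformly $O(1)$; on its complement one has a Frostman-type pointwise bound $|\log\rho(z,\zeta)|\le C(1-|z|)(1-|\zeta|)/|1-\bar\zeta z|^2$ via the identity $1-\rho(z,\zeta)^2=(1-|z|^2)(1-|\zeta|^2)/|1-\bar\zeta z|^2$; the last integral is a standard Poisson-type kernel against $\psi$.

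The hard part will be to bound $D^{-\ga}$ of each of these three pieces by a constant multiple of the Frostman integral (\ref{e:fr2r}). A naive pointwise estimate for the local piece is only $\mu_f(D(z,h(1-|z|)))=O((1-|z|)^{-\ga})$ (using $d\la_F|_\D=(1-|\zeta|)\,d\mu_F$ and $|\zeta_0-\zeta|\asymp 1-|z|$ for $\zeta$ in the local disk), but $D^{-\ga}$ of $(1-r)^{-\ga}$ is only logarithmic in $1/(1-r)$, so a pointwise-then-$D^{-\ga}$ approach fails. I would instead exchange $D^{-\ga}$ with the $\zeta$-integration by Fubini and prove, for each fixed $\zeta$, a uniform kernel bound in $r$ of the form $|D^{-\ga}_r\Psi(z,\zeta)|\le C(1-|\zeta|)/|\zeta_0-\zeta|^{1-\ga}$ (with weight $1$ in place of $1-|\zeta|$ for the boundary piece), where $\Psi$ is the respective integrand; $\zeta$-integration against $d\mu_f$ (resp.\ $d\psi$) then reassembles a constant multiple of $\int d\la_F(\zeta)/|\zeta_0-\zeta|^{1-\ga}$. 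Such kernel estimates parallel Lemma \ref{l:arg}, with $\log|b(z,\zeta)|$ playing the role of $\arg b(z,\zeta)$ and with the near-zero singularity absorbed by the $N_z(h)$-regularizer; combined with Theorem \ref{t:arg}'s imaginary-part bound they deliver (\ref{e:intr}).
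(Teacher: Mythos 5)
Your proposal follows the paper's proof in all essentials: necessity by taking imaginary parts and invoking Theorem \ref{t:arg}; sufficiency by writing $\log|f(z)|+N_z(h)$ as a local Riesz piece over $D(z,h(1-|z|))$, a nonlocal Riesz piece, and the boundary Poisson term, then pushing $D^{-\ga}$ through the $\zeta$-integration by Fubini and reassembling (\ref{e:fr2r}). Your observation that one cannot first estimate $\mu_f(D(z,h(1-|z|)))=O((1-r)^{-\ga})$ and then apply $D^{-\ga}$ is also correct, and matches the paper's decision to keep the sum over zeros intact under $D^{-\ga}$.

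What you call ``the hard part,'' however, contains two steps you should not treat as parallel to Lemma \ref{l:arg}, since that lemma controls $\arg b$, not $\log|b|$. First, the pointwise bound $-\log\rho(z,\zeta)\le C(h)\,(1-|z|)(1-|\zeta|)/|1-\bar\zeta z|^2$ on $\D\setminus D(z,h(1-|z|))$ is false near $\rho=0$; it becomes valid only after one proves that $\zeta\notin D(z,h(1-|z|))$ forces $\rho(z,\zeta)\ge h/(2+h)>0$, and this is exactly the inclusion $\mathcal{D}(z,h/(2+h))\subset D(z,h(1-|z|))$ of (\ref{e:hyper}), which requires its own computation with the pseudohyperbolic center and radius. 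Combined with Tsuji's inequality $-\log|b(z,\zeta)|\le 2|A(z,\zeta)|$ on $\{|A(z,\zeta)|<\tfrac12\}$ (cited in the paper from \cite{Ts}), that inclusion yields the needed modulus analogue $-\log|b(z,\zeta)|\le C(h)|A(z,\zeta)|$ for $\zeta$ off the local disk. Second, the kernel estimate after Fubini is $D_r^{-\ga}|A(re^{i\vfi},\zeta)|\le C(\ga)(1-|\zeta|^2)/|1-re^{i\vfi}\bar\zeta|^{1-\ga}$, i.e. Lemma \ref{l:gaint}, together with Lemma \ref{l:GPV} to replace $|1-\bar z\zeta|$ by $|1-\zeta|$ inside the Stolz angle; these should be invoked explicitly rather than left as an analogy. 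Finally, the paper uses a simplification worth adopting: $\Re\bigl(\log f(z)+N_z(h)\bigr)\le 0$ identically (the regularizer $N_z(h)$ exactly cancels the local singularities, each nonlocal $\log|b|\le 0$, and the boundary Poisson term is nonpositive), so since $D^{-\ga}$ is a positive operator it suffices to bound $\Re(\log f+N_z(h))$ from below by $-C(h)\sum_n|A(z,a_n)|$; the two-sided control you seek for the local piece is then automatic.
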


\begin{corollary}\label{c:lnb}
Let $B$ be a Blaschke product defined by (\ref{e:bla}), $0\le
\gamma<1$, $\zeta_0\in
\partial \D$, $0<h<1$. In order that for every subproduct $B^*$ of
$B$ and every $\sigma>1$ there exist a constant $K=K(\ga, \si, B)>0$
such that
\begin{equation*}\label{e:intrB}
  \sup _{z\in S_*(\zeta_0)} |D^{-\ga} (\log B(z) +N_z(h))|<K
\end{equation*}
it is necessary and sufficient that
\begin{equation*}
 \sum\limits_{k = 1}^\infty
  \frac{1 - \left| a_k \right|}{\left|\zeta_0 - a_k
   \right|^{1-\ga}}<\infty.
\end{equation*}
\end{corollary}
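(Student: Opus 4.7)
The plan is to derive Corollary \ref{c:lnb} as a direct specialization of Theorem \ref{t:lnb} to the case $F=B$. I would first recall from the paragraph following (\ref{e:cm}) that the complete measure of a Blaschke product with zeros $\{a_k\}$ as in (\ref{e:bla}) satisfies $\lambda_B(M)=\sum_{a_k\in M}(1-|a_k|)$, so that $\lambda_B$ is purely atomic in $\D$ with no contribution from $\partial\D$. Consequently the integral condition (\ref{e:fr2r}) from Theorem \ref{t:lnb} collapses to
$$\int_{\overline{\D}}\frac{d\lambda_B(\zeta)}{|\zeta_0-\zeta|^{1-\ga}}=\sum_{k=1}^\infty\frac{1-|a_k|}{|\zeta_0-a_k|^{1-\ga}},$$
which is precisely the Frostman-type series appearing in the corollary.

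Next I would match the quantifier ``for every divisor $f$ of $F$'' with ``for every subproduct $B^*$ of $B$''. Since $B$ is an inner function with trivial singular inner factor, the canonical factorization theorem for $H^\infty$ implies that, up to a unimodular constant, any $H^\infty$-divisor of $B$ is itself a Blaschke product whose zero sequence is a subsequence of $\{a_k\}$, i.e.\ a subproduct of $B$; conversely every subproduct is evidently a divisor. The unimodular constant affects $\log B^*$ only through a purely imaginary additive constant $c$, and since $D^{-\ga}c = c\, r^{\ga}/\Ga(\ga+1) = O(1)$ on $(0,1)$, this contribution is absorbed harmlessly into the constant $K$ of (\ref{e:intr}).

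With these two identifications in hand, Theorem \ref{t:lnb} applied with $F=B$ yields the equivalence stated in Corollary \ref{c:lnb}. Two minor bookkeeping points remain: the branch of $\log B^*(z)$ must be taken on the cut region $\D^*$ introduced after (\ref{e:bprod}) so that it is single-valued, and $N_z(h)$ is to be formed from the zeros of the chosen subproduct $B^*$; but these are exactly the conventions already in force in the setup of Theorem \ref{t:lnb}, so no new issue arises. The only conceptually nontrivial step is thus the divisor-to-subproduct correspondence, and that is a standard consequence of inner-outer factorization rather than a genuine obstacle; everything else is a transparent substitution into Theorem \ref{t:lnb}.
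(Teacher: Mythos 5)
Your overall strategy — specialize Theorem \ref{t:lnb} to $F=B$, observe that $\lambda_B$ is the purely atomic measure $\sum_k(1-|a_k|)\delta_{a_k}$ so that (\ref{e:fr2r}) collapses to the Frostman sum, and absorb the harmless unimodular constant — is the right one, and it disposes of the sufficiency direction cleanly (a subproduct is certainly an $H^\infty$-divisor of $B$, so the bound (\ref{e:intr}) for all divisors in particular gives it for all subproducts). You are also right to flag that $\log B^*$ and the corresponding $N_z(h)$ built from the zeros of $B^*$ are what is meant in the display, and that the cut region $\D^*$ fixes the branch.

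The gap is in the necessity direction, and it is located precisely in the sentence claiming that ``any $H^\infty$-divisor of $B$ is itself a Blaschke product whose zero sequence is a subsequence of $\{a_k\}$, i.e.\ a subproduct.'' That is false: the paper's notion of divisor is $H^\infty$-divisibility, not inner divisibility, and $H^\infty$-divisors of $B$ can carry nontrivial outer factors. For instance, if $B=B_1B_2$ is any splitting of the zeros into two subproducts, then $f(z)=e^{z}B_1(z)$ and $h(z)=e^{-z}B_2(z)$ are both in $H^\infty$ and $fh=B$, yet $|f(e^{i\theta})|=e^{\cos\theta}\prod|a_n^{(1)}|$ is non-constant, so $f$ is not a unimodular constant times a subproduct. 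Consequently the corollary's hypothesis (bound over \emph{subproducts}) is strictly weaker than the hypothesis of Theorem \ref{t:lnb} applied with $F=B$ (bound over all $H^\infty$-divisors), and one cannot pass from the former to the latter by the identification you propose. The corollary is nevertheless correct, but for the reason signalled in the remark following Theorem \ref{t:arg}: the proof of necessity there uses only the two special subproducts $B^*$ formed by the $a_n$ with $\Im a_n\ge 0$ (respectively $\le 0$) and $|1-a_n|\le 1/3$, together with the imaginary part of $h_{\psi_1}$, and when $F=B$ the latter term is absent since the boundary part of $\lambda_B$ vanishes. Thus the hypothesis ``for every subproduct'' already supplies everything the necessity argument consumes. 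Your write-up should replace the divisor-to-subproduct identification by this observation (or, equivalently, invoke Corollary \ref{t:b_arg} after taking imaginary parts in (\ref{e:intr})).
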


Statements of such type can be used for obtaining  estimates for
the minimum modulus of analytic and subharmonic functions
(\cite{Hay_min,Kras,Li_m}), but  we omit this topic here.

If $F$ has no zeros, we easily obtain

\begin{corollary}\label{c:no_zer}
Let $g\in H^\infty$ be of the form (\ref{e:noze}), $0\le
\gamma<1$, $\zeta_0\in
\partial \D$. In order that for every divisor $g^*$ of $g$ and
every $\sigma>1$ there exist a constant $K=K(\ga, \si, g)>0$ such
that
\begin{equation*}
  \sup _{z\in S^*_\si(\zeta_0)} |D^{-\ga} \log g^*(z) |<K,
\end{equation*}
it is necessary and sufficient that
\begin{equation}\label{e:fr3r}
\int_{\partial{\D}}
\frac{d\psi(\zeta)}{|\zeta_0-\zeta|^{1-\ga}}<\infty,
\end{equation}
where $\psi$ is the Stieltjes measure generated by $\psi^*$.
\end{corollary}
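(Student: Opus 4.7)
The plan is to obtain Corollary \ref{c:no_zer} as an immediate specialization of Theorem \ref{t:lnb} to the zero-free case. Two reductions are needed: one for the right-hand integral condition, and one for the left-hand asymptotic condition.

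First I would identify the complete measure $\lambda_g$. Since $g$ has the representation \eqref{e:noze}, it has no zeros in $\D$, so $\log|g|$ is harmonic throughout $\D$ and its Riesz measure $\mu_g$ vanishes. By \eqref{e:cm}, the complete measure $\lambda_g$ is therefore supported entirely on $\partial\D$, with $d\lambda_g\bigr|_{\partial\D}=d\psi$ by property (4). Consequently the integral in \eqref{e:fr2r} collapses to the integral in \eqref{e:fr3r}.

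Second, I would show that every divisor $g^*$ of $g$ is itself zero-free. If $g=g^*h$ with $h\in H^\infty$, then $g^*(z)\ne 0$ for all $z\in\D$ because $g$ has no zeros; hence $n_z(h)\equiv 0$ and so $N_z(h)\equiv 0$ for every $z\in\D$. Therefore the expression $\log g^*(z)+N_z(h)$ appearing in \eqref{e:intr} reduces to $\log g^*(z)$ alone, and condition \eqref{e:intr} becomes exactly the condition in Corollary \ref{c:no_zer}.

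Applying Theorem \ref{t:lnb} with $F=g$ then yields the stated equivalence. There is no real obstacle: the only point requiring a brief justification is that divisors of a zero-free $H^\infty$ function inherit the zero-free property, which is immediate from the definition of a divisor. Everything else is a verbatim specialization.
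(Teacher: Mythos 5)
Your proof is correct and matches the paper's own (very terse) derivation: the corollary is obtained precisely by specializing Theorem \ref{t:lnb} to $F=g$, noting that $\mu_g=0$ forces $\lambda_g$ to live on $\partial\D$ with $\lambda_g|_{\partial\D}=\psi$, and that any divisor of a zero-free function is zero-free, so $N_z(h)\equiv 0$. One minor stylistic point: you reuse the letter $h$ both for the $H^\infty$ cofactor in $g=g^*h$ and for the parameter in $N_z(h)$; choosing a different symbol for one of them would avoid any momentary confusion, but the argument is sound as written.
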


Let $\psi$ and $\chi$ be  Borel  measures on $\partial \D$. We
shall write that $\chi\prec \psi$ if $\chi (M) \le \psi(M)$ for an
arbitrary Borel set $M\subset\partial \D$. Note that $g_\chi$ is a
divisor of $g_\psi$ if and only if $\chi\prec \psi$.

Applying Corollary \ref{c:no_zer} and Theorem \ref{t:arg} to the function $g_\psi(z)=\exp\{ h_\psi(z)\}$ of form (\ref{e:noze}), we obtain
\begin{theorem}\label{t:eqv}
Let \begin{equation}\label{e:noze1} h_\psi(z)= \int\limits_{ - \pi
}^\pi {\frac{{e^{it}  + z}}{{e^{it}  - z}}d\psi^* (t)} ,
\end{equation}
where $ \psi^* $ is a monotone function on $[-\pi,\pi]$. Let
$0\le \ga <1$, and $\zeta_0\in
\partial \D$. Let $\psi$ be the Stieltjes measure generated by
$\psi^*$. The following conditions are equivalent:
\begin{itemize}
  \item [1)] For every Borel measure $\chi$ on $\partial \mathbb{D}$ such that $\chi \prec \psi$ and
every $\sigma>1$ there exists a constant $K=K(\ga, \si, \psi)>0$
such that
\begin{equation*}
  \sup _{z\in S^*_\si(\zeta_0)} |D^{-\ga} h_\chi(z) |<K.
\end{equation*}
  \item [2)] For every Borel measure $\chi$ on $\partial D$ such that $\chi \prec \psi$ and
every $\sigma>1$ there exists a constant $K=K(\ga, \si, \psi)>0$
such that
\begin{equation*}
  \sup _{z\in S^*_\si(\zeta_0)} |D^{-\ga} \Im h_\chi(z) |<K.
\end{equation*}
  \item [3)] Condition (\ref{e:fr3r}) holds.
\end{itemize}
\end{theorem}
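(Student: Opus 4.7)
The plan is to deduce the three-way equivalence from Corollary \ref{c:no_zer} (giving (1)$\Leftrightarrow$(3)) and Theorem \ref{t:arg} (giving (2)$\Leftrightarrow$(3)), both applied to $g=g_\psi$. The bridge in each case is purely algebraic: match the divisors of $g_\psi$ with the parametrizing measures $\chi\prec\psi$, translate between $h_\chi$ and $\log g_\chi$ (respectively between $\Im h_\chi$ and $\arg g_\chi$), and compute the complete measure of the zero-free function $g_\psi$.

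First I would observe, as noted immediately before the statement, that a function of the form \eqref{e:noze1} is a divisor of $g_\psi$ in $H^\infty$ precisely when $\chi\prec\psi$. For such $\chi$ the function $g_\chi$ has no zeros in $\D$, so a choice of the logarithm consistent with \eqref{e:noze} gives
\[
\log g_\chi(z)=-\frac{1}{2\pi}h_\chi(z)+iC'_\chi,\qquad \arg g_\chi(z)=-\frac{1}{2\pi}\Im h_\chi(z)+C'_\chi.
\]
Under the operator $D^{-\ga}$ the additive real constant contributes at most the bounded term $C'_\chi r^\ga/\Ga(\ga+1)$ (or $C'_\chi$ itself when $\ga=0$), so boundedness of $|D^{-\ga}h_\chi|$ and of $|D^{-\ga}\Im h_\chi|$ on $S^*_\si(\zeta_0)$ is equivalent, respectively, to boundedness of $|D^{-\ga}\log g_\chi|$ and of $|D^{-\ga}\arg g_\chi|$, for every $\chi\prec\psi$.

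Second, since $g_\psi$ is zero-free its Riesz measure vanishes on $\D$, so the complete measure defined by \eqref{e:cm} satisfies $\la_{g_\psi}\bigr|_\D\equiv 0$ and $\la_{g_\psi}\bigr|_{\partial\D}=\psi$. Consequently the Frostman-type integral \eqref{e:fr2} featured in Theorem \ref{t:arg} collapses in this setting to \eqref{e:fr3r}. Putting the two translations together, (1)$\Leftrightarrow$(3) is exactly Corollary \ref{c:no_zer} applied to $g_\psi$, while (2)$\Leftrightarrow$(3) is exactly Theorem \ref{t:arg} applied to $F=g_\psi$, which completes the chain (1)$\Leftrightarrow$(2)$\Leftrightarrow$(3). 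The only point requiring any care is the additive-constant bookkeeping described above; this is harmless because the image of a constant under $D^{-\ga}$ is bounded on $[0,1)$ for every $\ga\in[0,1)$ and can therefore be absorbed in the constant $K$. I do not expect a substantive obstacle beyond this, since the real content of the equivalence was already established in the two results being invoked.
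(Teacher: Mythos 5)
Your argument is correct and is exactly the route the paper takes: the paper simply states that Theorem \ref{t:eqv} follows by applying Corollary \ref{c:no_zer} and Theorem \ref{t:arg} to $g_\psi=\exp\{-\tfrac{1}{2\pi}h_\psi\}$, and you have supplied the (straightforward) bookkeeping it leaves implicit, namely the identification of divisors of $g_\psi$ with measures $\chi\prec\psi$, the relation $\log g_\chi=-\tfrac{1}{2\pi}h_\chi+iC'_\chi$, the harmlessness of additive constants under $D^{-\ga}$, and the observation that $\la_{g_\psi}$ is concentrated on $\partial\D$ where it equals $\psi$.
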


\section{Proof of Theorem \ref{t:arg}} \label{s:prof_arg}

We may assume that $\zeta_0=1$. We restrict ourself to the case
$0<\ga <1$. Let $f$ be a divisor of $F$, and $f$ of form
(\ref{e:fbound}). First, we consider $\arg B(z)$, and start with proof of Lemma 1.

\begin{proof}[Proof of Lemma 1]
We consider the triangle with the vertices $A=z\bar \xi$,
$B=|\xi|^2$, $C=1$; $AB=1-|\xi|^2$, $BC=||\xi|^2-z\bar \xi|$,
$AC=|1-\bar \xi z|$. The quantity  $$\varphi_\xi=\arg b(z,\xi)=\arg
\frac {|\xi|^2-z \bar \xi}{1-z\bar \xi}$$ is the value of the  angle between
the vectors $\vec{AB}$ and $\vec{AC}$. The cut $ \{\zeta=\tau \xi:
1\le \tau \le \frac 1{|\zeta|}\}$ corresponds to $BC$. Thus,
  $|\vfi_\xi|<\pi$ if $z\bar \xi\not \in BC$. For $z\bar \xi\in
  BC$, i.e. for $z$ laying on the cut, we define by the semicontinuity
  $\vfi_\xi\stackrel{\rm
  def}=-\pi$. Therefore, $\arg b(z, \xi)$ is defined in
  $\D\setminus \{\xi\}$ but, obviously,  not continuous on the cut.

Let $D_\xi$ be the disk constructed on AB as on the diameter. We
consider two cases.

If $C=z\bar \xi \in D_\xi$, then $\pi/2< |\varphi_\xi|\le \pi$ and
$|z\bar \xi -1|\le 1-|\xi|^2$, i.e. $|A(z, \xi)|\ge 1$. Therefore
$|\arg b(z, \xi)| \le \pi=\min\{\pi, |A(z, \xi)|\}$ as required.

 If $z\bar \xi \not\in D_\xi$, then $|\varphi_\xi|\le \pi/2$.
 Thus, $\varphi_\xi=\arcsin \frac{\Im b(z,\xi)}{|b(z, \xi)|}$.
 Since $$\Im b(z, \xi)=-\Im A(z, \xi)=\Im(\bar z \xi)\frac
 {1-|\xi|^2}{|1-\bar \xi z|^2}, $$
 we have
\begin{gather} \label{e:barg}
  |\varphi_\xi|=
 \biggl|\arcsin \frac{\Im(\bar z \xi)} {||\xi|^2 -z\bar \xi|}
 \frac{1-|\xi|^2}{|1-z\bar\xi|} \biggr|\le\\ \le  \arcsin \min\Bigl\{1,
 \frac{1-|\xi|^2}{|1-z\bar\xi|}\Bigr\} \le \frac \pi 2\min\{1,
  |A(z, \xi)|\}.\nonumber
\end{gather}
\end{proof}

\begin{lemma}  \label{l:gaint} Let $0\le \ga < \al <\infty$. Then there exists a constant $C(\ga, \al)>0$ such that
\begin{equation}\label{e:gaint}
D^{-\ga} \frac{1}{|1-r\zeta|^\al} \le \frac{C(\ga, \al
)}{|1-r\zeta|^{\al-\ga}}, \quad \zeta \in \overline{\D}, 0<r<1
\end{equation}
\end{lemma}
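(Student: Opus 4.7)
My plan is to unwind the definition
$$D^{-\ga}\frac{1}{|1-r\zeta|^\al}=\frac{1}{\Ga(\ga)}\int_0^r\frac{(r-x)^{\ga-1}}{|1-x\zeta|^\al}\,dx$$
and estimate this single-variable integral directly. The case $\ga=0$ is immediate. When $|\zeta|\le\tfrac12$, both $|1-x\zeta|$ and $|1-r\zeta|$ are pinched in $[\tfrac12,\tfrac32]$, so both sides of (\ref{e:gaint}) are comparable to $1$ and the bound reduces to $\int_0^r(r-x)^{\ga-1}dx=r^\ga/\ga\le 1/\ga$, finite in terms of $\ga,\al$ alone. So I may assume $0<\ga<\al$ and $\tfrac12\le|\zeta|\le1$.

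The key input will be the two-sided comparability
$$|1-x\zeta|\asymp(1-x)+|1-\zeta|,\qquad x\in[0,1],$$
with constants uniform for $\tfrac12\le|\zeta|\le 1$. The upper direction is the triangle inequality applied to $1-x\zeta=(1-\zeta)+(1-x)\zeta$. For the lower direction I write $\zeta=\rho e^{i\th}$ and combine the identity
$$|1-x\zeta|^2=(1-x\rho)^2+4x\rho\sin^2(\th/2)$$
with the trivial bound $1-x\rho\ge\max(1-x,\,x(1-\rho))$ and the half-angle estimate $\sin^2(\th/2)\ge(\th/\pi)^2$ on $[-\pi,\pi]$, splitting according to $x\le\tfrac12$ or $x\ge\tfrac12$, to conclude that $|1-x\zeta|\ge c\bigl[(1-x)+(1-\rho)+|\th|\bigr]\asymp(1-x)+|1-\zeta|$.

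Writing $\tau=|1-\zeta|$ and $s=1-r$, the same comparability gives $|1-r\zeta|\asymp s+\tau$, and the problem reduces to an estimate of the form
$$\int_0^r(r-x)^{\ga-1}\bigl[(1-x)+\tau\bigr]^{-\al}dx\le C(\ga,\al)\,(s+\tau)^{\ga-\al}.$$
The substitution $w=r-x$ converts the left side into $\int_0^r w^{\ga-1}(w+s+\tau)^{-\al}dw$, which I dominate by the improper integral $\int_0^{\ity}w^{\ga-1}(w+c)^{-\al}dw$ with $c=s+\tau$; rescaling $u=w/c$ identifies this with $c^{\ga-\al}B(\ga,\al-\ga)$, the Beta integral converging precisely because $0<\ga<\al$, which is our hypothesis. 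Combined with $c\asymp|1-r\zeta|$, this closes the estimate. The main technical obstacle is the lower direction of the comparability in the previous paragraph; once that is in hand, everything else is routine Gamma/Beta calculus.
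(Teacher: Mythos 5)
Your proof is correct, and it follows a genuinely different route from the paper's. The paper proves the lower bound $|1-x\zeta|\ge|1-r\zeta|\cos(\theta/2)$ for $0\le x\le r$ by a geometric case analysis, and then splits the integral at $x=r-2|1-r\zeta|$: on the near piece it applies that bound and integrates $(r-x)^{\ga-1}$ explicitly, while on the far piece it uses $r-x\gtrsim 1-x|\zeta|$ to trade the singular factor for a power of $1-x|\zeta|$ and integrates that. You instead establish the two-sided comparison $|1-x\zeta|\asymp(1-x)+|1-\zeta|$ uniformly for $|\zeta|\ge\frac12$, $x\in[0,1]$ (after disposing of the $\ga=0$ and $|\zeta|\le\frac12$ cases), which collapses the whole estimate, via $w=r-x$ and $u=w/(s+\tau)$, into the Beta integral $B(\ga,\al-\ga)$. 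The paper's proof only needs one-sided information about $|1-x\zeta|$ but pays with the integral split and some constant chasing; your proof requires a sharper pointwise input (the quasi-additive comparability, whose lower direction you rightly flag as the technical crux) but is cleaner downstream and makes the role of the hypothesis $0\le\ga<\al$ transparent, since it is exactly the convergence condition for $B(\ga,\al-\ga)$.
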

\begin{proof}[Proof of Lemma \ref{l:gaint}]
Let $\arg \zeta =\th$. Then
\begin{equation}\label{e:le}
  |1-x\zeta|\ge |1-r\zeta|\cos (\th/2), \quad 0\le x\le r<1.
\end{equation}
In fact, geometric arguments yield  that if $|r\zeta|\le\cos \th$,
then $|1-x\zeta|\ge |1-r\zeta|$. Otherwise, $\cos\th <
|r\zeta|<1$, and we deduce $$ |1-x\zeta|\ge |1-e^{i\th} \cos\th|=
|1-e^{i\th}| \cos(\th/2)\ge |1-r\zeta| \cos(\th/2)$$ as required.

Without loss of generality we may assume that $$|\th|\le \pi/4,\quad
\frac 12<r < 1, \quad  2|1-r\zeta|<r.$$ Using (\ref{e:le}), we obtain
\begin{gather*}
D^{-\ga} \frac1{|1-r\zeta|^\al}=\frac1{\Gamma(\ga)} \int_0^r
\frac{(r-x)^{\ga-1} }{|1-x\zeta|^\al} dx= \\ =
\frac1{\Gamma(\ga)}\biggl( \int_0^{r-2|1-r\zeta|}+\int
_{r-2|1-r\zeta|}^r \biggr)\frac{(r-x)^{\ga-1} }{|1-x\zeta|^\al}
dx\le \\ \le  \frac1{\Gamma(\ga)}\biggl( \int_0^{r-2|1-r\zeta|}
\frac{(r-x)^{\ga-1} }{(1-x|\zeta|)^\al} dx+\int _{r-2|1-r\zeta|}^r
\frac{(r-x)^{\ga-1} }{|1-r\zeta|^\al\cos^\al \frac\th 2}
dx\biggr)\le \\ \le \frac1{\Gamma(\ga)}\biggl(
\int_0^{r-2|1-r\zeta|} \frac{dx}{(1-x|\zeta|)^{1-\ga+\al}}-
\frac{(r-x)^{\ga}}{\ga|1-r\zeta|^\al\cos^\al\frac\th2}
\Bigr|^r_{r-2|1-\zeta r|}\biggr)=\\
=\frac1{\Gamma(\ga)}\biggl(\frac{1}{(\al-\ga)|\zeta|}
\frac1{(1-x|\zeta|)^{\al-\ga}}\Bigr|_0^{r-2|1-r\zeta|} +
\frac{2^\ga}{\ga \cos^\al\th/2|1-r\zeta|^{\al-\ga}} \biggr)\le \\
\le \frac1{\Gamma(\ga)}\biggl( \frac{2}{\al-\ga} \frac1{(1-r
+2|1-r\zeta||\zeta|)^{\al-\ga}}+ \frac{2^{\ga+\al/2 +
1}}{\ga|1-r\zeta|^{\al -\ga}}\biggr) \le\frac{C(\ga, \al
)}{|1-r\zeta|^{\al-\ga}}.
\end{gather*}
The lemma is proved.
\end{proof}
In order to finish the proof of the sufficiency we need the
following lemma (\cite[Lemma 1]{GPV}).
\begin{alemma} \label{l:GPV} Given $\si\ge 1$ there exists a constant $C(\si)>0$ such
that $$ |1-\zeta|\le C(\si)|1-\bar z\zeta| , \quad \zeta \in \D,
\; z\in S_\si.$$
\end{alemma}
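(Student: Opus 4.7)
The plan is to reduce the desired inequality to the triangle inequality combined with the defining condition of the Stolz angle $S_\si$. Starting from the algebraic identity
\[
1-\zeta = (1-\bar z\zeta) + \zeta(\bar z - 1),
\]
the triangle inequality together with $|\zeta|\le 1$ yields at once
\[
|1-\zeta| \le |1-\bar z\zeta| + |1-z|.
\]
So the whole problem reduces to bounding $|1-z|$ in terms of $|1-\bar z\zeta|$, uniformly for $\zeta\in\D$ and $z\in S_\si$.

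For that step I would combine two elementary estimates. The Stolz-angle condition asserts $|1-z|\le \si(1-|z|)$ for every $z\in S_\si$. On the other hand, for $\zeta\in\overline\D$ and $z\in\D$ the reverse triangle inequality gives
\[
|1-\bar z\zeta| \ge 1 - |\bar z\zeta| = 1 - |z||\zeta| \ge 1 - |z|,
\]
with the geometric interpretation that $1-|z|$ is the distance from the point $1$ to the closed disk $\{w:|w|\le |z|\}$, in which $\bar z\zeta$ necessarily lies. Chaining the two inequalities gives $|1-z|\le \si|1-\bar z\zeta|$, and substituting back yields
\[
|1-\zeta| \le (1+\si)\,|1-\bar z\zeta|,
\]
so the lemma holds with $C(\si)=1+\si$.

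There is essentially no obstacle to this argument: both ingredients are one-line computations, and no geometry beyond the triangle inequality is required. The only point that warrants a moment of attention is the regime where $|z|$ is close to $1$ and $1-|z|$ is small; but in that regime the Stolz-angle condition transfers the smallness directly onto $|1-z|$, while the estimate $1-|z|\le|1-\bar z\zeta|$ remains valid throughout $\D$, so the two bounds combine cleanly.
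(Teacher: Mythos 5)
Your proof is correct and complete. The identity $1-\zeta = (1-\bar z\zeta) + \zeta(\bar z - 1)$ together with $|\zeta|\le 1$ gives $|1-\zeta|\le|1-\bar z\zeta|+|1-z|$; the Stolz-angle bound $|1-z|\le\si(1-|z|)$ and the trivial estimate $1-|z|\le|1-\bar z\zeta|$ then combine to give the constant $C(\si)=1+\si$. Note that the paper does not actually prove this lemma — it simply quotes it as Lemma 1 from Girela, Pel\'aez, and Vukoti\'c \cite{GPV} — so there is no internal proof to compare against; your argument is the natural self-contained one and is entirely adequate.
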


By Lemma \ref{l:arg} we have
\begin{equation*}
  |\arg B(z)|\le \pi  \sum_{n=1}^\infty
\frac{1-|a_n|^2}{|1- z\bar a_n|^{2}}\le \frac{C(F)}{1-r}.
\end{equation*}
 Using Lemmas \ref{l:arg},
\ref{l:gaint}, \ref{l:GPV} and (\ref{e:fr2}) we obtain for $z\in
S_\si$
\begin{gather}\nonumber
D^{-\ga} |\arg B(z)|\le\sum_{n=1}^\infty D^{-\ga} |\arg b(z,
a_n)|\le \\ \le \pi\sum_{n=1}^\infty D^{-\ga} \frac{1-|a_n|^2}{|1-z\bar
a_n|} \le \nonumber
 \pi C(\ga)\sum_{n=1}^\infty \frac{1-|a_n|^2}{|1-z\bar
a_n|^{1-\ga}}\le \\ \le  \pi C(\ga,\si) \sum_{n=1}^\infty
\frac{1-|a_n|^2}{|1- a_n|^{1-\ga}}< C(\ga, \si, F)<+\infty.
\end{gather}

We now consider $\arg g(z)$. In view of (\ref{e:noze}) we have
$(z=\rr\vfi)$
\begin{equation}\label{e:img}
\arg g(z)=\Im  \biggl\{ - \frac 1{2\pi} \int\limits_{ - \pi }^\pi
{\frac{{e^{it} + z}}{{e^{it}  - z}}d\psi^* (t)} \biggr\} ={ -
\frac 1{2\pi} \int\limits_{ - \pi }^\pi
{\frac{r\sin(\vfi-t)}{|e^{it} - z|^2}d\psi^* (t)} },
\end{equation}
Using Lemmas \ref{l:gaint} and \ref{l:GPV} for $z\in S_\si$, we
deduce
\begin{gather*}
D^{-\ga} |\arg g(z)|=\biggl| \frac 1{\Gamma(\ga)} \int_0^r
(r-x)^{\ga-1} dx \int_{-\pi}^\pi \frac{x \sin(\vfi-t)}{|e^{it}  -
xe^{i\vfi}|^2}d\psi^* (t)\biggr| \le \\ \le \int_{-\pi}^\pi \frac
{|\sin(\vfi-t)|}{\Gamma(\ga)} d\psi^* (t) \int_0^r
\frac{(r-x)^{\ga-1}}{|e^{it} - xe^{i\vfi}|^2} dx \le \\ \le C(\ga)
\int_{-\pi}^\pi \frac{|\sin(\vfi-t)|}{|e^{it} -
re^{i\vfi}|^{2-\ga}}d\psi^* (t) \le C(\ga) \int_{-\pi}^\pi
\frac{1}{|e^{it} - re^{i\vfi}|^{1-\ga}}d\psi^* (t)\le  \\ \le
C(\ga,\si) \int_{-\pi}^\pi \frac{1}{|e^{it} - 1|^{1-\ga}}d\psi^*
(t). \label{e:dga_up}
\end{gather*}

 The sufficiency is proved.

{\it Necessity.} First, we consider the subproduct $B^*$ of $B$
constructed by the zeros $a_n$ satisfying $\Im a_n \ge 0$,
$|1-a_n|\le \frac13 $. We denote such $a_n$ by $a_n^*$.

Let $z=\rr\vfi$ satisfy $\arg(1-z)=\pi/4$, $\zeta\in [0,z]$,
$\zeta=\rho$. In particular, $\Im \zeta<0$. Then
$$ \Im ({a}_n^*
\bar \zeta )=-\Im \zeta \Re a^*_n + \Im {a}^*_n \Re \zeta \ge 0, $$
and consequently (see (\ref{e:barg})) $$\arg b(\zeta,a_n^*)\ge \arcsin \frac{\Im(\bar
\zeta a_n^*)(1-|a_n^*|^2)}{||a_n^*|^2-\zeta \bar a_n^*||1-\bar
a_n^* \zeta|}\ge 0 .$$
  By our assumption
\begin{gather}\nonumber
  C\ge D^{-\ga} \arg B^* (z)= \sum_n D^{-\ga} \arg b(z,a_n^*)\ge
  \\ \ge
  \sum_n \int_0^r (r-t)^{\ga-1} \arcsin
\frac{\Im(te^{-i\vfi}a_n^*)(1-|a_n^*|^2)}{||a_n^*|^2-t
e^{i\vfi}\bar a_n^*||1-\bar a_n^* te^{-i\vfi}|} dt.\label{e:larg}
\end{gather}
For every  $a_n^*$ satisfying $1-|a_n^*|\ge 2(1-r)$ and $\zeta\in
[0,z]$ such that $$|1-a^*_n|\le r-\rho\le 2|1- a^*_n|$$ we
have $$ \rho\ge r-2|1-a_n^*|\ge r-\frac23 > \frac14, \quad
r\uparrow1.$$ Thus, $$|\Im \zeta|\ge |\Im z|/4\ge (1-r)/4.$$ Hence,
\begin{gather}\label{e:as2}
\Im (a_n^* \bar \zeta)\ge -\Im \zeta \Re a_n^* \ge     \frac{\Re
a^*_n}{4} (1-r). \end{gather}\ Similarly,
\begin{gather}\label{e:ans2}
\Im (a_n^* \bar \zeta)\ge \Re \zeta \Im a_n^* \ge
    \frac{\Re z}2
|1-a_n^*|,  \quad  {a_n^*\not\in S_2}.
  \end{gather}

Further,
\begin{gather}
 |a^*_n-\zeta|\le |1-a^*_n|+{2}|1-|\zeta|| =
|1-{a_n^*}| +{2} (r-|\zeta|+1-r)\le \nonumber \\ \le |1-a^*_n|
+{2}(2+\frac 12)|1-a^*_n|= 6|1-a^*_n|, \label{e:den1}
\\ |1-{\bar a}^*_n \zeta|\le  |1-{\bar a}^*_n| +|{\bar
a}^*_n-{\bar a}^*_n\zeta|\le 6|1-a^*_n|. \label{e:den2}
\end{gather}
Thus, for $a_n^*\in S_{2}$ using (\ref{e:as2}), (\ref{e:den1}), and
(\ref{e:den2}) we have
\begin{gather}
\int_0^r (r-t)^{\ga-1} \arcsin \nonumber \frac{\Im({\bar a}^*_n
te^{i\varphi} )(1-|a^*_n|^2)}{||a^*_n|^2-te^{i\varphi} {\bar
a}^*_n||1-{\bar a}^*_n te^{i\varphi}|} dt \ge
\\ \ge \nonumber \int_0^r \frac{(r-t)^{\ga-1} \Re a^*_n
(1-t)(1-|a^*_n|^2)}{144 |1-a^*_n|^2|a^*_n|} dt \ge
\\  \ge \nonumber C(\ga)
\int\limits_{r-2|1-a^*_n|}^{r-|1-a^*_n|}\frac{(r-t)^{\ga } \Re
a^*_n }{ |a^*_n| (1-|a^*_n|)} dt \ge C(\ga)\frac{1}{1-|a^*_n|}
\int\limits_{r-2|1-a^*_n|}^{r-|1-a^*_n|} (r-t)^{\ga}\, dt\ge
\\ \ge C(\ga)(1-|a^*_n|)^\ga.
\end{gather}

If $a_n^*\in \D\setminus S_2$, then using
(\ref{e:ans2})--(\ref{e:den2}) we obtain
\begin{gather}
\int_0^r (r-t)^{\ga-1} \arcsin \nonumber \frac{\Im({\bar a}_n^*
te^{i\varphi} )(1-|a^*_n|^2)}{||a^*_n|^2-te^{i\varphi} {\bar
a}_n^*||1-{\bar a}_n^* te^{i\varphi}|} dt \ge
\\ \ge \nonumber \int_0^r \frac{(r-t)^{\ga-1} \Re z
|1-a^*_n|(1-|a^*_n|^2)}{72 |1-a^*_n|^2|a^*_n|} dt \ge
\\  \ge \nonumber C(\ga)
\int\limits_{r-2|1-a^*_n|}^{r-|1-a^*_n|}\frac{(r-t)^{\ga-1 } \Re
z|1-|a^*_n||}{ |a^*_n| |1-a^*_n|} dt \ge \\ \ge
C\frac{1-|a^*_n|}{|1-a^*_n|}
\int\limits_{r-2|1-a^*_n|}^{r-|1-a^*_n|} (r-t)^{\ga-1}\, dt\ge
 C\frac{1-|a^*_n|}{|1-a^*_n|^{1-\ga}}.
\end{gather}

 Hence, $$C>D^{-\ga} \arg {B^*}(z)=\sum_n D^{-\ga} \arg
b(z,a^*_n)> C \sum_{|a_n^*|\le 1-2(1-r)}
\frac{1-|a^*_n|}{|1-a_n^*|^{1-\ga}}.$$ Since the constants $C$ are
independent of $r$, tending $r\uparrow 1$ we get the statement of the
necessity for $\arg B^*$, and consequently for $\arg B$.

Now, we have to estimate $D^\ga(\arg g_{\psi})$ from below. Let
$\psi_1$ be the restricted function of $\psi^*$ on $[0,\pi/2]$.
Let $\arg (1-z)=\frac \pi 4 $. Then
\begin{gather*} D^{-\ga} \Im g_{\psi_1}(z)=  \frac1{\Gamma(\ga)}\int_{-\pi}^\pi
\int_0^r \frac{(r-\rho)^{\ga-1}\sin (t-\vfi)}{|\rho
e^{i\vfi}-e^{it}|^2} d\rho d\psi_1(t) =\\ =\frac1{\Gamma(\ga)}
 \int_{0}^{\pi/2} \sin (t-\vfi) d\psi^*(t) \int_{0}^{r} \frac{(r-\rho)^{\ga-1}}
 {|\rho e^{i\vfi} -e^{it}|^2} d\rho.\end{gather*}
In order to estimate the inner integral we may assume that
$r>2|z-e^{it}|$ without loss of generality. For $|z-e^{it}|\le
r-\rho\le 2|z-e^{it}|$ we have $$ |\rho e^{i\vfi} -e^{it}|\le |z-
\rho e^{i\vfi}|+ |z-e^{it}|\le $$ $$ \le (1+o(1))|r-\rho|+ |z-e^{it}|\le
4|z-e^{it}|, \quad r\uparrow1.$$ Moreover, since $\arg z\sim r-1$,
 we have $t-\vfi\ge (1+o(1))(1-r)$ as $ r\uparrow1$.
Then,
\begin{gather*}
|z-e^{it}|= |r-e^{i(t-\vfi)}|\le 1-r
+1-\cos(\vfi-t)+\sin(\vfi-t)\le  \\ \le (1+o(1))\sin(1-r) +2\sin^2
\frac{t-\vfi}2 +\sin(t-\vfi)\le \\ \le(4+o(1))\sin(t-\vfi), \quad r
\uparrow1.
\end{gather*}
Using the latter estimates we deduce
\begin{gather*}
C\ge D^{-\ga} \Im g(z) \ge \frac1{\Gamma(\ga)}\int_0^{\pi/2}  \sin
(t-\vfi) d\psi^*(t) \int_{r-2|z-e^{it}|}^{r-|z-e^{it}|}
\frac{(r-\rho)^{\ga-1}}{|\rho e^{i\vfi} -e^{it}|^2} d\rho  \ge \\
\ge \int_{0}^{\pi/2} \frac{\sin (t-\vfi)}{16 |z-e^{it}|^2}
d\psi^*(t) \int_{r-2|z-e^{it}|}^{r-|z-e^{it}|} {(r-\rho)^{\ga-1}}
d\rho \ge
\\ \ge  C(\ga) \int_{0}^{\pi/2} \frac{\sin (t-\vfi)|z-e^{it}|^\ga}{
|z-e^{it}|^2} d\psi^*(t) \ge  C(\ga) \int_0^{\pi/2}
\frac{d\psi^*(t)}{|z-e^{it}|^{1-\ga}}.
\end{gather*}
Tending $r$ to 1 and using   Fatou's lemma we  conclude that $$C
\ge C(\ga) \int_0^{\pi/2} \frac{d\psi^*(t)}{|1-e^{it}|^{1-\ga}}.$$
Similarly, it can be shown that $ \int_{-\pi/2}^0
\frac{d\psi(t)}{|1-e^{it}|^{1-\ga}}<C$, and consequently,
 $$ \int_{-\pi}^\pi
\frac{d\psi(t)}{|1-e^{it}|^{1-\ga}}<C.$$
 Theorem \ref{t:arg} is proved.

\section{Proof of Theorem \ref{t:lnb} and final remarks}

\begin{proof}[Proof of Theorem \ref{t:lnb}]

 The necessity of the theorem follows
from Theorem \ref{t:arg}.

{\it Sufficiency.} Let $f$ be a divisor of $F$. Without loss of generality
we may assume that $f=Bg$, where $B$ and $g$ are defined as above. Let $L(z,h, f)=\log f(z)+N_z(h)$.
 We have
\begin{gather}\nonumber
\Re L(z, h, f)=\Re L(z,h,B)-  \frac 1{2\pi}\int_{-\pi}^\pi \Re\frac{e^{it} +z}{e^{it} -z} d\psi^*(t)=\\ =\sum_{|a_n-z|\le
h(1-r)} \ln\Bigl|\frac{a_nh(1-r)}{1-z\bar a_n} \Bigr|+ \nonumber
\sum_{|a_n-z|> h(1-r)} \ln\Bigl|\frac{a_n(z-a_n)}{1-z\bar a_n}
\Bigr|- \\ - \frac 1{2\pi}\int_{-\pi}^\pi \frac{1-r^2}{|e^{it}
-z|^2} d\psi^*(t)\le 0. \label{e:rel}
\end{gather}

Let us estimate $\Re L(z,h,f)$ from the below. For $|a_n-z|\le
h(1-r)$ we have
\begin{equation*}\label{e:ineq}
|1-z\bar a_n|=|1-|z|^2 +z(\overline{z-a_n})|\le 1-r^2+rh(1-r)\le
(2+h)(1-r),
\end{equation*}
and
\begin{equation} |a_n|\ge r-h(1-r)\ge \frac{1-h}2, \quad r\ge \frac 12. \label{e:an_low}
\end{equation}  Hence,
\begin{gather}\nonumber
\sum_{|a_n-z|\le h(1-r)} \ln \Bigl| \frac{a_nh(1-r)}{1-z\bar a_n}
\Bigr| \ge \sum_{|a_n-z|\le h(1-r)} \ln \frac{|a_n|h}{2+h} \ge \\ \ge  - C(h)
\sum_{|a_n-z|\le h(1-r)} |A(z,a_n)| , \quad r\ge \frac12,
\label{e:in7'}\end{gather}
because $C_1\le |A(z,a_n)|\le C_2$  if $|a_n-z|\le h(1-r)$.

On the other hand, (see \cite[p.13]{Ts})
\begin{equation}\label{e:in8}
\sum_{|A(z,a_n)|< \frac12} -\ln|b(z,a_n)|\le 2 \sum_{|A(z,a_n)|<
\frac12} |A(z,a_n)|.
\end{equation}

It is known that a pseudohyperbolic disk $\mathcal{D}(z,s)=\Bigl\{
\zeta: \Bigl|\frac {z-\zeta}{1-z\bar \zeta}\Bigr|<s\Bigr\}$ is the
disk $D(z^*, \rho_z(s))$, where $$z^*=\frac{(1-s^2)z}{1-s^2|z|^2},\quad
\rho_z(s)=\frac{(1-|z|^2)s}{1-s^2|z|^2}.$$ We are going to prove
that
\begin{equation}\label{e:hyper}
\mathcal{D}\Bigl(z,\frac{h}{2+h}\Bigr)\subset D(z, (1-|z|)h).
\end{equation}
It is sufficient to show that $|z^*-z|+\rho_z(s)\le h(1-|z|)$ for
$s\le h/(2+h)$. We have $(|z|=r)$
\begin{gather*}
|z^*-z|+\rho_z(s)=\frac{(1-r^2)(rs^2+s)}{1-s^2r^2}\le
\frac{2(1-r)s}{1-s}.
\end{gather*}
Thus, we arrive to the inequality $2s\le h(1-s)$, which is
equivalent to $-1\le s\le \frac h{2+h}$. Inclusion (\ref{e:hyper})
is proved. Therefore,
 for $a_n\not \in D(z, h(1-r))$
we have $$ -\ln|b(z,a_n)|\le \ln   \frac{2+h}{h|\bar a_n|}.$$
Hence, using (\ref{e:an_low})
\begin{gather}\nonumber
\sum_{\begin{substack}{|A(z,a_n)|\ge \frac12 \\ |a_n-z|>h(1-r)} \end{substack}} -\ln|b(z,a_n)|\le 2
\sum_{\begin{substack}{|A(z,a_n)|\ge \frac12 \\ |a_n-z|>h(1-r)} \end{substack}} \ln \frac{2+h}{|\bar a_n|h} \le \\ \le 4\ln
\frac{4+2h}{h(1-h)} \sum_{\begin{substack}{|A(z,a_n)|\ge \frac12 \\ |a_n-z|>h(1-r)} \end{substack}} |A(z,a_n)|.
\label{e:in8'}
\end{gather}
 It follows from (\ref{e:rel})--(\ref{e:in8'}) that $$ \Re L(z,h,
B) \ge - C(h)\sum_n |A(z,a_n)|.$$ Hence, as in the proof of the sufficiency of Theorem \ref{t:arg} (see (\ref{e:dga_up})) we
deduce for $z\in S_\si$
\begin{equation}\label{e:reb}
D^{-\ga} \Re L(z,h,B) \ge -C(h,\ga) \sum_n
\frac{1-|a_n|^2}{|1-\bar a_n z|^{1-\ga}}\ge -C(h,\ga,\si,B). \quad
\end{equation}
Further,
 $$\frac 1{2\pi}\int_{-\pi}^\pi \frac{1-r^2}{|e^{it}
-z|^2} d\psi^*(t)\le  \frac 1{2\pi}\int_{-\pi}^\pi
\frac{d\psi^*(t)}{|e^{it} -z|}. $$ Applying Lemma 2 for $z $
laying in  the Stolz angle $S_\si$, we obtain
\begin{gather}
\nonumber
  D^{-\ga} \biggl| \int_{-\pi}^\pi  \frac{1-r^2}{|e^{it}
-z|^2} d\psi^*(t)\biggr| \le   \int_{-\pi}^\pi D^{-\ga}
\Bigl(\frac{1}{|e^{it} -z|}\Bigr) d\psi^*(t) \le \\ \le C(\ga)
\int_{-\pi}^\pi \frac{d\psi^*(t)}{|e^{it} -z|^{1-\ga}} \le C(\ga,
\si) \int_{-\pi}^\pi \frac{d\psi(t)}{|e^{it} -1|^{1-\ga}}<\infty.
\end{gather}
Together with (\ref{e:reb}) this yields $D^{-\ga} \Re L(z,h, f)\ge
- C$. And, in view of (\ref{e:rel}) we, finally, have $|D^{-\ga}
\Re L(z,h, F)|\le C$.

It remains to apply Theorem \ref{t:arg}. Theorem \ref{t:lnb} is
proved.
\end{proof}

\begin{rem} Frostman type condition (\ref{e:fr2}) can be rewritten in terms of the
modulus of continuity of the complete measure. Let
$\lambda_F(\zeta, \tau)\dff\la_F(\overline{D(\zeta,\tau)})$. Then
(\ref{e:fr2}) is equivalent to
\begin{equation*}
\int_0^2 \frac{d\la_F(\zeta_0, \tau)}{\tau^{1-\ga}}<+\infty
\; \mbox{ \rm or } 
\int_0^2 \frac{d\om(\tau; \zeta_0, \la_F)}{\tau^{1-\ga}}<+\infty
\end{equation*}
where $\om(\tau; \zeta_0, \la_F)$ is the modulus of continuity of
the measure $\la_F$ at the point~$\zeta_0$.
\end{rem}

From this point of view it is interesting to compare Theorem
\ref{t:eqv} with results from \cite{Ch1}, where necessary and
sufficient conditions for growth of the maximum modulus and the
maximum of the real part  of $h_\psi$ is established in terms of
the modulus of continuity of the function $\psi^*$. Similar
results for $L^p$-metrics are obtained in \cite{Chmm}.

I would like to thank  Prof.\ G.\ Gundersen and the  participants of  the Lviv seminar on the
theory of analytic functions for valuable comments which
contribute to the improvement of the initial version of the paper.


%

\end{document}